\newcommand{\ga}{\alpha}
\newcommand{\gb}{\beta}
\renewcommand{\gg}{\gamma}
\newcommand{\gd}{\delta}
\newcommand{\gw}{\omega}
\newcommand{\gs}{\sigma}
\newcommand{\eps}{\varepsilon}
\newcommand{\liff}{\leftrightarrow}
\newcommand{\supp}{\mathrm{supp}}
\newcommand{\dom}{\mathrm{dom}}
\newcommand{\rng}{\mathrm{rng}}
\newcommand{\power}{\mathcal{P}}
\newcommand{\id}{\mathrm{id}}
\newcommand{\stab}{\mathrm{stab}}
\newcommand{\pstab}{\mathrm{pstab}}
\newcommand{\acts}{\curvearrowright}
\newtheorem{theorem}{Theorem}[section]
\newtheorem{claim}[theorem]{Claim}
\newtheorem{fact}[theorem]{Fact}
\newtheorem{proposition}[theorem]{Proposition}
\theoremstyle{definition}
\newtheorem{definition}[theorem]{Definition}
\newtheorem{example}[theorem]{Example}
\title{Dynamical ideals and the axiom of choice\footnote{2020 AMS subject classification 03E25, 22F05.}}
\author{
Jind{\v r}ich Zapletal\\
University of Florida\\
zapletal@ufl.edu}
\begin{document}
\maketitle

\begin{abstract}
I provide several natural properties of group actions which translate into fragments of axiom of choice in the associated permutation models of choiceless set theory.
\end{abstract}

\section{Introduction}

The field of symmetric and permutation models of choiceless set theory is commonly perceived as chaotic. One needs to only look at the encyclopedic form of \cite{howard:ac}, complete with a floppy disc, to understand where that impression comes from. The purpose of this paper is to provide several natural properties of group actions which translate into fragments of axiom of choice in the associated permutation models (or Fraenkel--Mostowski models, as they are frequently called). Very many results  quoted in \cite{howard:ac} follow essentially immediately, and every now and then a novel conclusion about a known model appears. The main point is that the properties of group actions considered have intrinsic interest, perhaps justifying the study of permutation models in the eyes of a non-specialist. Evaluation of these properties in very natural cases is often challenging, and it leads to natural questions in model theory, combinatorics, geometric topology, and other fields.

The sections are ordered in decreasing strength of the fragments of axiom of choice they deal with. In Section~\ref{notationsection}, I fix the terminology and gather some well-known or easy results concerning the construction of the permutation models. In Section~\ref{wosection}, I consider the axiom of well-ordered choice, and provide a dynamical equivalent to it, cofinal orbits--Definition~\ref{cofinaldefinition} and Theorem~\ref{cofinaltheorem}. The ideal of nowhere dense subsets of a manifold of any finite dimension has cofinal orbits \cite{young:personal}. For any interesting topological space, the status of cofinal orbits of the ideal of nowhere dense sets seems to be a challenging and often open problem. In Section~\ref{dcsection}, I consider the axiom of dependent choices. A dynamical equivalent to it is isolated in Definition~\ref{dccompletedefinition} and  Theorem~\ref{dctheorem}; its version for symmetric models was considered previously by Karagila and Schilhan \cite{karagila:dc}. In Section~\ref{countablesection}, I consider the axiom of countable choice and its dynamical equivalent, $\gs$-closure. I show that many dynamical ideals have this property, such as the ideal of countable closed subsets of the real line, or the well-ordered subsets of rationals. In Section~\ref{simplesection}, I consider the statement that unions of well-orderable collections of well-orderable sets are well-orderable. There is a neat dynamical criterion similar to topological simplicity of groups. Naturally enough, abelian group actions are never simple except for trivial cases, Theorem~\ref{abeliantheorem}. One simple ideal is the ideal of finite sets on many limit Fraisse structures, Theorem~\ref{fraissetheorem}. Finally, in Section~\ref{finitesection}, I show how to rule out amorphous or infinite, Dedekind-finite sets from permutation models. The criterion uses a stratification of the ideal into an increasing union which exhibits a degree of $\gs$-closure.

\section{Notation and terminology}
\label{notationsection}

In this section, I show how to obtain a permutation model (a Fraenkel--Mostowski model in the terminology of \cite{howard:ac}) of the theory ZF with atoms from an object called a dynamical ideal. The construction is well-known; the section serves mostly to introduce suitable terminology.

\begin{definition}
ZFA, the set theory with atoms, is the theory with the following description.

\begin{enumerate}
\item Its language contains a binary relational symbol $\in$ and a unary relational symbol $\mathbb{A}$ for atoms;
\item its axioms include all usual axioms of ZF, except for the axiom of extensionality, which is stated only for sets which are not atoms;
\item there are two additional axioms: $\forall x\ \mathbb{A}(x)\to\forall y\ y\notin x$ and $\exists y\ \forall x\ y\in x\liff \mathbb{A}(x)$ (atoms form a set).
\end{enumerate}

\noindent ZFCA is ZFA plus the axiom of choice.
\end{definition}

\noindent The main issue in ZFA and ZFCA (about which its axioms say nothing) is the structure of its set of atoms. It is not difficult to build a model of ZFCA with a prescribed set of atoms.

\begin{definition}
\label{vxdefinition}
Let $X$ be a set. There is up to class isomorphism unique class model $M$ of ZFCA satisfying the following demands:

\begin{enumerate}
\item $X=\mathbb{A}^M$;
\item the membership relation of $M$ is well-founded;
\item for every $m\in M$ the collection $\{n\in M\colon n\in^Mm\}$ is a set as opposed to proper class;
\item for every set $B\subset M$ there is an element $m\in M$ such that $B=\{n\in M\colon n\in^Mm\}$.
\end{enumerate}

\noindent The model $M$ will be denoted by $V[[X]]$.
\end{definition}

\noindent An explicit and well-known construction of the model can be found for example in \cite[Lemma 15.47]{jech:newset}. By an abuse of notation, the membership relation of $V[[X]]$ will be denoted by $\in$. By another abuse of notation, I will identify $X$ with the element of the model $V[[X]]$ which contains exactly all the $V[[X]]$-atoms.

\begin{definition}
A set $A\in V[[X]]$ is \emph{pure} if $V[[X]]\models$ the transitive closure of $A$ contains no atoms. The class $\{A\in V[[X]]\colon A$ is pure$\}$ is called the \emph{pure part} of $V[[X]]$ and denoted by $V$.
\end{definition}

\noindent It is immediate that the pure part of $V[[X]]$ is definably isomorphic to the set-theoretic universe in which the model $V[[X]]$ is built. In the construction of inner models of ZFA, group actions play central role. The following definition records key notational elements.

\begin{definition}
Suppose that $\Gamma\acts X$ is a group action.

\begin{enumerate}
\item $\Gamma\acts V[[X]]$ is the unique action extending the original one and satisfying $\gamma\cdot A=\{\gamma\cdot B\colon B\in A\}$ for all $\gamma\in \Gamma$ and $A\in V[[X]]$;
\item if $A\in V[[X]]$ then $\stab(A)$, the stabilizer of $A$, is the set $\{\gamma\in\Gamma\colon \gamma\cdot A=A\}$;
\item if $a\subset X$ then $\pstab(a)$, the \emph{pointwise stabilizer of $x$}, is the set $\{\gamma\in \Gamma\colon\forall x\in a\ \gamma\cdot x=x\}=\bigcap_{x\in a}\stab(x)$.
\end{enumerate}
\end{definition}

\noindent Stabilizers and pointwise stabilizers are subgroups of $\Gamma$. The extension of the action to all of $V[[X]]$ is an action of $\Gamma$ by $\in$-automorphisms; it is not difficult to show by $\in$-recursion that the action $\Gamma\acts V[[X]]$ fixes all pure elements. To construct an inner model of ZFA, an additional piece of data is needed:

\begin{definition}
A \emph{dynamical ideal} is a tuple $\Gamma\acts X, I$ where $\Gamma$ is a group acting on a set $X$ and $I$ is an ideal on the set $X$ containing all singletons, invariant under the action. That is to say, for every $a\in I$ and every $\gamma\in\Gamma$, the set $\gamma\cdot a=\{\gamma\cdot x\colon x\in a\}$ belongs to $I$.
\end{definition}

\noindent One point in this paper is that the class of dynamical ideals is much broader than the examples normally discussed in connection with permutation models, and that there are tools to investigate even the more ``exotic" examples.

\begin{example}
\label{automorphismexample}
Let $\mathcal{S}$ be a structure with countable universe $X$. Let $\Gamma$ be the group of automorphisms of $\mathcal{S}$, acting on $X$ by application. Let $I$ be the ideal of finite subsets of $X$. Then $\Gamma\acts X, I$ is a dynamical ideal.
\end{example}

\begin{example}
\label{homeomorphismexample}
Let $X$ be a topological space, let $\Gamma$ be the homeomorphism group acting on $X$ by application, and let $I$ be an ideal defined from the topology only:

\begin{enumerate}
\item the ideal of nowhere dense sets;
\item the ideal of sets with countable closure;
\item the ideal of sets with zero-dimensional closure.
\end{enumerate}

\noindent Then $\Gamma\acts X, I$ is a dynamical ideal.
\end{example}

\begin{definition}
Let $\Gamma\acts X, I$ be a dynamical ideal. The \emph{associated permutation model} $W[[X]]$ is the transitive part of the class $\{A\in V[[X]]\colon\exists b\in I\colon \pstab(b)\subseteq\stab(A)\}$.
\end{definition}

\noindent Note that the notation $W[[X]]$ abstracts away from the group action and the ideal for the sake of brevity. This should not cause any confusion. The following is well-known \cite[Chapter 4]{jech:choice}.

\begin{fact}
$W[[X]]$ is a model of Zermelo--Fraenkel set theory with atoms.
\end{fact}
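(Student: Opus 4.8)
The plan is to run the standard verification of the axioms one by one, organized around the notion of a support. Call $b\in I$ a \emph{support} of $A\in V[[X]]$ if $\pstab(b)\subseteq\stab(A)$; thus the symmetric elements are exactly those possessing a support, and, unwinding the phrase ``transitive part'', $A\in W[[X]]$ if and only if every element of the transitive closure $\mathrm{TC}(\{A\})$ has a support. The two facts that drive everything are, first, the conjugation identities $\stab(\gamma\cdot A)=\gamma\,\stab(A)\,\gamma^{-1}$ and $\pstab(\gamma\cdot b)=\gamma\,\pstab(b)\,\gamma^{-1}$, proved by a one-line computation from the definitions, and, second, their consequence that if $b$ supports $A$ then $\gamma\cdot b$ supports $\gamma\cdot A$. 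Since $I$ is $\Gamma$-invariant, this shows that the class $W[[X]]$ is itself closed under the action of $\Gamma$; I would use this invariance repeatedly. I would also record at the outset that $V\subseteq W[[X]]$ (every pure element is fixed by all of $\Gamma$, hence has support $\emptyset\in I$, as does every element of its transitive closure), that each atom $x$ lies in $W[[X]]$ with support the singleton $\{x\}\in I$, and that the set $X$ of all atoms lies in $W[[X]]$ with support $\emptyset$.

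The routine axioms I would dispatch quickly. Extensionality for non-atoms and foundation are inherited from $V[[X]]$ because $W[[X]]$ is transitive and $\in$-well-founded; infinity holds because $\gw\in V\subseteq W[[X]]$; the two atom axioms hold because atoms have no members (an absolute fact) and because $X\in W[[X]]$ witnesses that the atoms form a set. For pairing, if $A,B$ have supports $a,b$ then $a\cup b\in I$ supports $\{A,B\}$; for union, the same support $a$ of $A$ supports $\bigcup A$, the verification being the observation that an $\in$-automorphism fixing $A$ setwise fixes $\bigcup A$ setwise. In each case hereditary symmetry is automatic because the resulting set is a subset of the transitive class $W[[X]]$.

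The three axioms that require the invariance of $W[[X]]$ in an essential way are separation, power set, and replacement, and these are where the real content lies. The uniform device is this: given defining data (a set $A$ and finitely many parameters $\bar p$ from $W[[X]]$) with supports whose union is some $c\in I$, every $\gamma\in\pstab(c)$ is an $\in$-automorphism of $V[[X]]$ that maps $W[[X]]$ onto itself and fixes $A$ and each parameter; consequently $\gamma$ restricts to an automorphism of the structure $(W[[X]],\in)$ fixing the parameters, and therefore preserves the relativized satisfaction relation $W[[X]]\models\gf$. For separation this immediately shows that $c$ supports $\{x\in A\colon W[[X]]\models\gf(x,\bar p)\}$, a set in $V[[X]]$ by separation there applied to the definable class $W[[X]]$. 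For power set, with $A$ of support $a$, the object $\{B\in W[[X]]\colon B\subseteq A\}$ is the power set of $A$ as computed in $W[[X]]$ (since $\subseteq$ is absolute), it is a set in $V[[X]]$, and $a$ supports it: if $\gamma\in\pstab(a)$ and $B\subseteq A$ is symmetric then $\gamma\cdot B\subseteq\gamma\cdot A=A$ is again in $W[[X]]$ by invariance. Replacement is handled the same way, the one extra ingredient being that the image set is first produced as a genuine set by replacement in $V[[X]]$, applied to the $V[[X]]$-definable function $x\mapsto y$ with domain $A$, after which $c$ is shown to support it by the automorphism argument.

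I expect the main obstacle to be the power set and replacement steps, precisely because there one must certify that a collection defined by quantifying over the proper class $W[[X]]$ is nonetheless a symmetric set. The point I would be most careful about is the preservation of the relativized satisfaction relation under $\pstab(c)$: this is exactly where the $\Gamma$-invariance of $W[[X]]$, and hence the conjugation identities of the first paragraph, are indispensable. Everything else reduces either to the ambient ZFC of $V[[X]]$ or to the closure of $I$ under finite unions together with the containment of all singletons in $I$.
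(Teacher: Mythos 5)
Your proposal is correct, but note that the paper itself gives no proof of this Fact at all: it is quoted as well-known, with a citation to Jech's \emph{The Axiom of Choice}, Chapter 4. Your argument is essentially the standard one that this reference contains --- supports closed under finite unions, the conjugation identities $\pstab(\gamma\cdot b)=\gamma\,\pstab(b)\,\gamma^{-1}$ giving $\Gamma$-invariance of $W[[X]]$ (which the paper does prove separately, in Proposition~\ref{siproposition}), routine verification of the easy axioms, and the relativized-satisfaction argument for separation, power set, and replacement --- so there is nothing to flag beyond the fact that you have reconstructed the cited proof rather than diverged from an in-paper one.
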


In the rest of this section, I derive several properties of the model $W[[X]]$ which hold irrespective of the dynamical ideal used.

\begin{proposition}
\label{siproposition}
Let $\Gamma\acts X, I$ be a dynamical ideal.

\begin{enumerate}
\item $X$ and $I$ both belong to the associated permutation model;
\item $V\subset W[[X]]$;
\item (support invariance) the relation $\{\langle b, A\rangle\colon b\in I$ and $\pstab(b)\subset\stab(A)\}$ is invariant under the group action and as such belongs to the permutation model;
\item the permutation model is invariant under the group action.
\end{enumerate}
\end{proposition}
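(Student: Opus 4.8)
The engine for all four parts is the conjugation identity for stabilizers: for every $\gamma\in\Gamma$, every $A\in V[[X]]$, and every $a\subseteq X$,
\[
\stab(\gamma\cdot A)=\gamma\,\stab(A)\,\gamma^{-1},\qquad \pstab(\gamma\cdot a)=\gamma\,\pstab(a)\,\gamma^{-1}.
\]
Both are immediate unwindings of the definitions: $\delta\cdot(\gamma\cdot A)=\gamma\cdot A$ iff $(\gamma^{-1}\delta\gamma)\cdot A=A$, and the pointwise version follows the same way using $\gamma\cdot a=\{\gamma\cdot x\colon x\in a\}$. I will also use repeatedly that $\emptyset\in I$ (as $I$ is an ideal) and $\pstab(\emptyset)=\Gamma$, so that any object with full stabilizer is symmetric with empty support.

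For part (1), note $\gamma\cdot X=X$ for every $\gamma$, so $\stab(X)=\Gamma$ and $X$ is symmetric with empty support; its members are atoms $x$, each symmetric via the singleton support $\{x\}\in I$, so $X$ lies in the transitive part. For $I$ itself, invariance of the ideal gives $\gamma\cdot I=I$, hence $\stab(I)=\Gamma$ and $I$ is symmetric with empty support; each $a\in I$ is symmetric with support $a$, since $\pstab(a)\subseteq\stab(a)$ directly from the definition of the pointwise stabilizer, and the members of $a$ are again atoms. Thus $I$ is hereditarily symmetric. Part (2) is the observation recorded just before the proposition that the action fixes every pure set: if $A$ is pure then $\stab(A)=\Gamma$, so $A$ is symmetric with empty support, and since the transitive closure of a pure set consists of pure sets the same applies hereditarily, giving $V\subseteq W[[X]]$.

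For part (3), write $R$ for the relation in question. Its invariance is exactly the conjugation identity: if $b\in I$ and $\pstab(b)\subseteq\stab(A)$, conjugating by $\gamma$ yields $\pstab(\gamma\cdot b)\subseteq\stab(\gamma\cdot A)$, while $\gamma\cdot b\in I$ by invariance of the ideal; hence $\langle\gamma\cdot b,\gamma\cdot A\rangle\in R$ whenever $\langle b,A\rangle\in R$, and applying $\gamma^{-1}$ gives the reverse inclusion, so $\gamma\cdot R=R$. That $R$ then belongs to the permutation model is the heart of the matter and the main obstacle. Its defining condition $\pstab(b)\subseteq\stab(A)$ refers to the external group action—and indeed $\Gamma$ itself need not lie in $W[[X]]$—so one cannot simply read the definition off inside the model, and moreover $R$ is a proper class, so the phrase must be understood as ``$R$ is a definable class of $W[[X]]$''. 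The work is therefore to invoke the general symmetric-model principle that a class invariant under $\Gamma$ and definable over $V[[X]]$ from parameters already in the model (here the ideal $I$ from part (1), together with the membership structure) reflects to a definable class of $W[[X]]$; the invariance just established is precisely the hypothesis this principle requires.

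Part (4) follows from the same conjugation identity together with the fact that each $\gamma$ acts as an $\in$-automorphism of $V[[X]]$. If $A\in W[[X]]$, then every $B$ in the transitive closure of $A$ carries some support $b_B\in I$ with $\pstab(b_B)\subseteq\stab(B)$; since $\gamma$ maps the transitive closure of $A$ bijectively onto that of $\gamma\cdot A$, each element $\gamma\cdot B$ of the latter has support $\gamma\cdot b_B\in I$ by conjugation, so $\gamma\cdot A$ is again hereditarily symmetric and lies in $W[[X]]$. Applying this to $\gamma^{-1}$ as well gives $\gamma\cdot W[[X]]=W[[X]]$.
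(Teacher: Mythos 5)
Your parts (1), (2), (4), and the invariance claim of (3) are correct and essentially identical to the paper's own proof: singleton supports for atoms, $\pstab(a)\subseteq\stab(a)$ for sets in $I$, full stabilizers for the invariant sets $X$ and $I$, fixity of pure sets for (2), the conjugation computation $\gd\in\pstab(\gamma\cdot b)\Rightarrow\gamma^{-1}\gd\gamma\in\pstab(b)$ for (3), and the observation that each $\gamma$ is an $\in$-automorphism carrying supports to supports for (4).

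The trouble is your treatment of the second clause of (3), ``as such belongs to the permutation model.'' You rightly flag that $R$ is a proper class and that its defining condition refers to the external action, with $\Gamma$ possibly outside $W[[X]]$; but the principle you then invoke requires $R$ to be definable over $V[[X]]$ from parameters lying in the model, and by your own observation it is not: $I$ and the membership relation do not determine the condition $\pstab(b)\subseteq\stab(A)$ --- the action is an essential parameter, and it is precisely the parameter you noted is unavailable. So the black box you appeal to cannot be opened as described; the appeal is circular as written. The repair is simpler and needs no definability at all: invariance alone makes $R$ \emph{amenable} to the model, which is the only sense in which the paper uses (3). First restrict $R$ to pairs whose second coordinate lies in $W[[X]]$ (the unrestricted class even contains pairs $\langle b, A\rangle$ with $A\notin W[[X]]$, for instance $A$ a full $\Gamma$-orbit of an unsupported set, which is invariant but not hereditarily symmetric). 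Then for any set $S\in W[[X]]$ with support $s\in I$, the set $R\cap S$ has all its elements in $W[[X]]$ by transitivity, and for every $\gamma\in\pstab(s)$ one has $\gamma\cdot(R\cap S)=(\gamma\cdot R)\cap(\gamma\cdot S)=R\cap S$ by the invariance you already established; hence $R\cap S\in W[[X]]$ with support $s$. This set-by-set amenability is exactly how the proposition is applied later in the paper, for example to conclude that the sequence $\langle A_m\colon m\in\gw\rangle$ belongs to the model in the stratification theorem, and it is all that ``belongs to the permutation model'' can mean for a proper class.
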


\begin{proof}
For (1), note that $\{x\}\in I$ and $\pstab(\{x\})=\stab(x)$; in conclusion $X\subset W[[X]]$. Since $X$ is invariant under the action, $X\in W[[X]]$.  For every set $a\in I$, $\pstab(a)\subset\stab(a)$, so $a\in W[[X]]$. Finally, since the ideal $I$ is invariant under the action, $\Gamma=\stab(I)$ holds and $I\in W[[X]]$ as desired. (2) is clear as all pure sets are fixed by the group action.

For (3), suppose that $\pstab(b)\subseteq\stab(A)$ and $\gamma\in\Gamma$ is an element. To show that $\pstab(\gamma\cdot b)\subseteq\stab(\gamma\cdot A)$, let $\gd\in\pstab(\gamma\cdot b)$ be an arbitrary element. Then $\gamma^{-1}\gd\gamma\in\pstab(b)$, so $\gamma^{-1}\gd\gamma\cdot A=A$, and multiplying both sides by $\gamma$, get $\gd\cdot(\gg\cdot A)=\gg\cdot A$. (4) is an immediate corollary of the definition of the permutation model and (3).
\end{proof}

\noindent In the permutation model, we view well-orderable sets as trivial: every structure on a well-orderable set is a copy of a structure in $V$. It will be useful to have a characterization of well-orderable sets.

\begin{proposition}
\label{wotheorem}
Let $\Gamma\acts X, I$ be a dynamical ideal. 
The following are equivalent for every set $A\in W[[X]]$:

\begin{enumerate}
\item $A$ is well-orderable in $W[[X]]$;
\item there is a set $b\in I$ such that $\pstab(b)\subseteq\pstab(A)$;
\item $\power\power(A)\cap W[[X]]=\power\power(A)\cap V[[X]]$;
\item $\power\power(A)$ is well-orderable in $W[[X]]$.
\end{enumerate}
\end{proposition}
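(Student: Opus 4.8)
The plan is to prove the cycle of implications $(1)\Rightarrow(2)\Rightarrow(3)\Rightarrow(4)\Rightarrow(1)$, since each successive implication is the most natural direction to chase. The conceptual heart of the matter is the standard principle in permutation-model theory that a set is well-orderable precisely when a single support pointwise-fixes all of it: a well-ordering lets us name each element canonically by its ordinal rank, so any automorphism fixing the set (with its well-ordering, which is itself a set with a small support) must fix every element.

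\medskip
\noindent\textbf{$(1)\Rightarrow(2)$.} Suppose $A$ is well-orderable in $W[[X]]$, witnessed by a bijection $f\colon\lambda\to A$ for some ordinal $\lambda$, with $f\in W[[X]]$. Since $f$ is a set in the permutation model, by the definition of $W[[X]]$ there is $b\in I$ with $\pstab(b)\subseteq\stab(f)$. Here I would use that $\lambda$ is a pure set, hence fixed pointwise by the action (by Proposition~\ref{siproposition}(2) and the remark that the action fixes pure elements). Thus if $\gamma\in\pstab(b)$ then $\gamma\cdot f=f$, and for each $\xi<\lambda$ we have $\gamma\cdot f(\xi)=(\gamma\cdot f)(\gamma\cdot\xi)=f(\xi)$ since $\gamma\cdot\xi=\xi$. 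As $f$ is onto $A$, this shows $\gamma$ fixes every element of $A$, i.e.\ $\gamma\in\pstab(A)$. Hence $\pstab(b)\subseteq\pstab(A)$.

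\medskip
\noindent\textbf{$(2)\Rightarrow(3)$.} This is the key step and I expect it to be the main obstacle, since it requires showing that no new second-order objects over $A$ appear in $W[[X]]$. Fix $b\in I$ with $\pstab(b)\subseteq\pstab(A)$. The containment $\power\power(A)\cap W[[X]]\subseteq\power\power(A)\cap V[[X]]$ is trivial since $W[[X]]\subseteq V[[X]]$, so the content is the reverse inclusion. Given any $S\in\power\power(A)\cap V[[X]]$, every $\gamma\in\pstab(b)$ fixes each element of $A$, hence fixes each subset of $A$ pointwise, hence fixes each element of $\power(A)$, hence fixes $S$ setwise; so $\pstab(b)\subseteq\stab(S)$ and therefore $S\in W[[X]]$. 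One must be slightly careful that $S$, being an element of $V[[X]]$ all of whose hereditary members trace back to subsets of the atom set $A$, indeed lies in the transitive part defining $W[[X]]$; this follows because the same support $b$ witnesses membership for $S$ and for all the relevant sets below it in its transitive closure.

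\medskip
\noindent\textbf{$(3)\Rightarrow(4)$ and $(4)\Rightarrow(1)$.} For $(3)\Rightarrow(4)$, note that in $V[[X]]$ (which satisfies choice) the set $\power\power(A)$ is well-orderable; by (3) it is literally the same set in $W[[X]]$, and a well-ordering of it is itself an element of $\power\power(\power\power(A))$—applying (3) one level up, or observing directly that the well-ordering has a subset-of-$A$ type support, places that well-ordering inside $W[[X]]$. For $(4)\Rightarrow(1)$, I would use the elementary ZF fact that if $\power\power(A)$ is well-orderable then so is $\power(A)$ and hence so is $A$ itself, since well-orderability passes to subsets and $A$ injects into $\power\power(A)$ via $a\mapsto\{\{a\}\}$. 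This closes the cycle. The only genuine care needed throughout is the bookkeeping in step $(2)\Rightarrow(3)$ to confirm that the uniform support $b$ propagates down transitive closures; everything else is the standard translation between syntactic well-orderings and the semantic stabilizer condition.
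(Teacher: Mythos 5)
Your steps $(1)\Rightarrow(2)$, $(2)\Rightarrow(3)$ and $(4)\Rightarrow(1)$ are correct and essentially identical to the paper's arguments (the paper runs a transfinite induction directly along the well-ordering rather than through an ordinal enumeration, but that is cosmetic). The genuine problem is your step $(3)\Rightarrow(4)$, and it is a logical gap, not bookkeeping. In that step your only hypothesis is statement (3), which concerns $\power\power(A)$. Neither of your two justifications is licensed by it: ``applying (3) one level up'' means invoking the equality $\power\power(\power\power(A))\cap W[[X]]=\power\power(\power\power(A))\cap V[[X]]$, which is a different, unproven instance of the statement (it would follow from (2) applied to $\power\power(A)$, but (2) is not your hypothesis here); and ``observing directly that the well-ordering has a subset-of-$A$ type support'' is exactly the support argument that also requires (2) --- a well-ordering of $\power\power(A)$ chosen arbitrarily in $V[[X]]$ has no reason to be fixed by any $\pstab(b)$ unless some $\pstab(b)$ pointwise fixes $A$. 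There is also a level miscount: such a well-ordering, as a set of Kuratowski pairs, is an element of $\power(\power\power(\power\power(A)))$, and even coded by initial segments it sits in $\power\power(\power\power(A))$; either way it lives above the level that (3) controls, so (3) cannot transfer it.

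This is precisely why the paper does not prove your cycle: it proves $(1)\Rightarrow(2)\Rightarrow(3)\Rightarrow(1)$ and, separately, $(2)\Rightarrow(4)\Rightarrow(1)$. The one idea missing from your write-up is the paper's proof of $(3)\Rightarrow(1)$: in $V[[X]]$ choose a well-ordering of $A$ itself and code it as its set of initial segments, which is an element of $\power\power(A)$ --- exactly the level that (3) does transfer into $W[[X]]$ --- then decode it inside $W[[X]]$ to recover a well-ordering of $A$ there. With that lemma your proof repairs easily: from (3) get (1), hence (2) by your first step, and then your ``direct observation'' (which is the paper's $(2)\Rightarrow(4)$: $\pstab(b)\subseteq\pstab(\power\power(A))$, hence $\pstab(b)$ fixes any $V[[X]]$-well-ordering of $\power\power(A)$) yields (4).
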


\noindent The equivalence of (1) and (4) is exactly the feature of permutation models which sets them apart from most symmetric models of ZF obtained as submodels of generic extensions.

\begin{proof}
For (1) implies (2), let $R$ be a well-ordering of $A$ in the permutation model, let $b\in I$ be such that $\pstab(b)\subseteq\stab(R)$, and by transfinite induction on $R$ prove that $\pstab(b)\subseteq\stab(B)$ for every $B\in A$. For (2) implies (3), note that $\pstab(b)\subset\pstab(B)$ for every $B\subset A$ and then $\pstab(b)\subset\pstab(C)$ for every $C\subset\power(A)\cap V[[X]]$. For (3) implies (1), use the axiom of choice in $V[[X]]$ to find a well-ordering on $A$ and code it as the set of its initial segments (an element of $\power\power(A)$) to transfer it to $W[[X]]$.

Finally, (2) implies (4) since $\pstab(b)\subseteq\pstab(\power\power(A))$ and then $\pstab(b)\subset\stab(R)$ for any relation $R$ on $\power\power(A)$ in $V[[X]]$, in particular for a well-ordering $R$ on $\power\power(A)$ obtained in $V[[X]]$ using the axiom of choice there. (4) implies (1) trivially: the map $B\mapsto \{\{ B\}\}$ is an injection from $A$ to $\power\power(A)$ in $W[[X]]$.
\end{proof}

\noindent Finally, I discuss a natural closure-type property of a dynamical ideal which is necessary for the statement of certain characterization theorems.

\begin{definition}
Let $\Gamma\acts X, I$ be a dynamical ideal.

\begin{enumerate}
\item a set $a\subset X$ is \emph{dynamicaly closed} if for every $x\notin a$ there is $\gamma\in \pstab(a)$ such that $\gamma\cdot x\neq x$;
\item the dynamical ideal is \emph{dynamically closed} if every set in $I$ is a subset of a dynamically closed set in $I$.
\end{enumerate}
\end{definition}

\noindent It is not hard to see that for every set $a\subset X$ there is the smallest set $b\subset X$ which is definably closed and $a\subset b$, namely $b=\{x\in X\colon \pstab(a)\subset\stab(x)\}$; I will call $b$ the \emph{dynamical closure of $a$}. A brief diagram-chasing argument shows that the dynamical closure operator is invariant under the group action. For every dynamical ideal $I$ there is also the smallest dynamical ideal $J$ which is dynamically closed and $I\subset J$, namely the ideal of all sets which are subsets of dynamical closures of sets in $I$. It is easy to check that the ideal $J$ thus defined is invariant under the group action. I will call $J$ the \emph{dynamical closure of $I$}. The following is nearly trivial.

\begin{proposition}
\label{closureproposition}
Let $\Gamma\acts X, I$ be a dynamical ideal, and let $\Gamma\acts X, J$ be its dynamical closure. 

\begin{enumerate}
\item The two dynamical ideals generate the same permutation model $W[[X]]$;
\item $J=\{a\in W[[X]]\colon a\subseteq X$ and $W[[X]]\models a$ is well-orderable$\}$.
\end{enumerate}
\end{proposition}

\begin{proof}
For (1), work in the model $V[[X]]$, let $A$ be any set, and note that there is a set $b\in I$ such that $\pstab(b)\subseteq\stab(A)$ iff there is a set $a\in J$ such that $\pstab(a)\subseteq\stab(A)$. The left-to-right implication follows from the inclusion $I\subseteq J$. For the right-to-left implication, let $a\in J$ be such that $\pstab(a)\subseteq\stab(A)$ and find a set $b\in I$ such that $\pstab(b)\subseteq\pstab(a)$. Clearly, $\pstab(b)\subseteq\stab(A)$ holds. (1) follows.

For (2), suppose that $a\subset X$ is a set in $W[[X]]$. If $a$ is well-orderable, let $\leq$ be a well-ordering on $a$ in the model $W[[X]]$. Let $b\in I$ be a set such that $\pstab(b)\subseteq\stab(\leq)$. By transfinite induction on $\leq$ argue that for every $x\in a$ and every $\gamma\in\pstab(b)$, $\gamma\cdot x=x$. It follows that $a$ is a subset of the dynamical closure of the set $b$, so $a\in J$. If, on the other hand, $a\in J$, then find a set $b\in I$ such that $\pstab(b)\subseteq\pstab(a)$, observe that for any relation $R\in V[[X]]$ on $a$ $\pstab(b)\subseteq\stab(R)$, and conclude that in particular, the model $W[[X]]$ contains every well-ordering on the set $a$ found in the model $V[[X]]$.
\end{proof}

\begin{example}
Let $X$ be a topological space, let $\Gamma$ be the group of self-homeomorphisms of $X$ acting on $X$ by application, and let $a\subset X$ be a set. The topological closure is a subset of the dynamical closure of $a$. This is the reason why in this situation I consider only ideals generated by closed sets. 
\end{example}

\section{Axiom of well-ordered choice}
\label{wosection}

The first common fragment of axiom of choice under consideration in this paper is the following.

\begin{definition}
\cite[Form 40]{howard:ac}
The \emph{axiom of well-ordered choice} is the statement that every well-ordered family of non-empty sets has a choice function.
\end{definition}

\noindent The dynamical counterpart to the well-ordered axiom of choice is the following.

\begin{definition}
\label{cofinaldefinition}
Let $\Gamma\acts X, I$ be a dynamical ideal. The ideal  has \emph{cofinal orbits} if for every $a\in I$ there is $b\in I$ which is $a$-\emph{large}: for every $c\in I$ there is $\gamma\in\pstab(a)$ such that $c\subseteq \gamma\cdot b$.
\end{definition}

\begin{theorem}
\label{cofinaltheorem}
Let $\Gamma\acts X, I$ be a dynamical ideal.

\begin{enumerate}
\item if the ideal  has cofinal orbits then the associated permutation model satisfies the axiom of well-ordered choice;
\item If the ideal $I$ is dynamically closed and the associated permutation model satisfies the axiom of well-ordered choice, then the ideal has cofinal orbits.
\end{enumerate}
\end{theorem}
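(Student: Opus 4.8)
The plan is to prove the two implications separately: part (1) is a direct construction, while part (2) turns on a rigidity trick for well-orders that is needed to get past the obvious but false shortcut.

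For part (1), suppose the ideal has cofinal orbits and let $\langle F(\alpha):\alpha<\kappa\rangle\in W[[X]]$ be a well-ordered family of non-empty sets with support $a_0\in I$. Using cofinal orbits, fix an $a_0$-large set $b\in I$, and for each $\alpha$ set $S_\alpha=\{h\in F(\alpha):\pstab(b)\subseteq\stab(h)\}$. First I would check $S_\alpha\neq\emptyset$: any $h\in F(\alpha)$ lies in the transitive class $W[[X]]$ and so has a support $c_h\in I$; $a_0$-largeness yields $\gamma\in\pstab(a_0)$ with $c_h\subseteq\gamma\cdot b$, whence $\pstab(b)\subseteq\stab(\gamma^{-1}\cdot h)$, and since $\gamma^{-1}\in\pstab(a_0)$ fixes the ordinal $\alpha$ we get $\gamma^{-1}\cdot h\in F(\alpha)$, that is $\gamma^{-1}\cdot h\in S_\alpha$. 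The map $\alpha\mapsto S_\alpha$ has support $a_0\cup b$ and hence belongs to $W[[X]]$. Crucially, every element of $U:=\bigcup_\alpha S_\alpha$ is fixed by $\pstab(b)$, so $\pstab(b)\subseteq\pstab(U)$ and Proposition~\ref{wotheorem} makes $U$ well-orderable in $W[[X]]$. Fixing a well-ordering of $U$ there, the map sending $\alpha$ to the least element of $S_\alpha$ is the desired choice function.

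For part (2), assume $I$ is definably closed and $W[[X]]$ satisfies well-ordered choice; fix $a\in I$. I would first replace $a$ by a definably closed member of $I$ containing it, which is legitimate since a set large for the larger support is automatically $a$-large (enlarging a set shrinks its pointwise stabilizer). So assume $a$ is definably closed and write $H=\pstab(a)$. The first observation is that the set $\mathcal Q$ of $H$-orbits on $I$ is well-orderable in $W[[X]]$: each orbit is fixed setwise by $H$, so $H=\pstab(a)\subseteq\pstab(\mathcal Q)$ and Proposition~\ref{wotheorem} applies with support $a$.

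The heart of the argument is the choice of family. After fixing a well-ordering of $\mathcal Q$ in $W[[X]]$, for each orbit $o\in\mathcal Q$ I would let $\mathcal F(o)$ be the set of all pairs $(c,<)$ with $c\in o$ and $<$ a well-ordering of $c$; this is non-empty because every $c\in I$ is well-orderable in $W[[X]]$ (Proposition~\ref{wotheorem}, with support $c$), and the family has support $a$. Well-ordered choice provides, for each $o$, a representative $c_o\in o$ together with a well-ordering $<_o$, and the resulting choice function has a support $d\in I$, which I enlarge to a definably closed set containing $a$. Now comes the key step: any $\delta\in\pstab(d)$ fixes both $o$ and the pair $(c_o,<_o)$, hence is an \emph{automorphism} of the well-ordered set $(c_o,<_o)$; as well-orders are rigid, $\delta$ must fix $c_o$ pointwise. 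Thus $\pstab(d)$ fixes every point of $c_o$, and definable closure of $d$ forces $c_o\subseteq d$. Finally, given arbitrary $c\in I$, its orbit $o=[c]$ has a representative $c_o\subseteq d$ with $c_o=\delta\cdot c$ for some $\delta\in H$, so $c\subseteq\delta^{-1}\cdot d$; hence $d$ is $a$-large and the ideal has cofinal orbits.

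The step I expect to be the main obstacle is precisely the passage from ``$\pstab(d)$ fixes $c_o$ setwise'' to ``$c_o\subseteq d$''. A choice function selecting only the set $c_o$ would yield mere setwise invariance, which does \emph{not} imply containment in $d$ even when $d$ is definably closed. Attaching a well-ordering to each chosen representative is what upgrades setwise invariance to pointwise fixing through the rigidity of well-orders, after which definable closure converts pointwise fixing into containment. Finding this device, rather than any computation, is the crux of part (2).
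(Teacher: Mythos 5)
Your proposal is correct and follows essentially the same route as the paper: in part (1) the same conjugation trick (moving an arbitrary element's support into the $a$-large set $b$, so that $\pstab(b)$-fixed representatives exist) drives the construction of the choice function, and in part (2) the same combination of the well-orderable orbit space of $I$ under $\pstab(a)$, the rigidity of well-orders upgrading setwise to pointwise fixing, and definable closure converting pointwise fixing into containment yields the $a$-large set. The only cosmetic differences are that you finish part (1) by selecting least elements inside $W[[X]]$ via Proposition~\ref{wotheorem} rather than verifying symmetry of an externally chosen selector, and in part (2) you take orbits of sets with well-orderings attached in the choice family, whereas the paper takes orbits of the well-orderings themselves.
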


\begin{proof}
For (1), let $W[[X]]$ be the permutation model, and suppose that $A$ is a well-orderable set of nonempty sets in $W[[X]]$. Use Proposition~\ref{wotheorem} to find a set $a\in I$ such that $\pstab(a)\subset\pstab(A)$. Let $b\in I$ be a set such that the $\pstab(a)$-orbit of $b$ is cofinal in $I$. Now, we claim that every set $B\in A$ contains a set $C\in B$ such that $\pstab(b)\subseteq\stab(C)$. To see this, let $D\in B$ be an arbitrary set, and let $d\in I$ be such that $\pstab(d)\subseteq\stab(D)$. Find a group element $\gamma\in\pstab(a)$ such that $d\subset\gamma\cdot b$ and let $C=\gamma^{-1}\cdot D$; we claim that the set $C$ works as required. First of all, it is clear that $C\in B$ holds since $\gamma$ (and $\gamma^{-1}$) fixes every element of $A$; in particular, it fixes $B$. Second, a diagram chasing argument shows that for every element $\gd\in\pstab(b)$, $\gamma\gd\gamma^{-1}\in\pstab(d)$, so $\gamma\gd\gamma^{-1}\cdot D=D$ and $\gd\cdot (\gamma^{-1}\cdot D)=\gamma^{-1}\cdot D$ as required.

Now, let $f$ be any selector on $A$ such that for every $B\in A$, $\pstab(b)\subseteq\stab(f(A))$ holds; such a selector exists by the previous paragraph. It is immediate that $\pstab(b)\subseteq\stab(f)$, so $f\in W[[X]]$ and (1) follows.

For (2), suppose that the dynamical ideal is dynamically closed and $W[[X]]$ satisfies well-ordered choice. Let $a\in I$ be an arbitrary set. In $W[[X]]$, consider the set $A$ of all well-orderings whose domain belongs to $I$. Note that for every set $b\in I$, every relation $R$ on $b$ in $V[[X]]$ belongs to $W[[X]]$ since $\pstab(b)\subset\stab(R)$; in particular, $b$ carries a well-ordering in $W[[X]]$. Let $E$ be the $\pstab(a)$-orbit equivalence relation on $A$, and let $B$ be the set $A/E$. Clearly, $\pstab(a)\subset\stab(A, E, B)$, so $A, E, B\in W[[X]]$ holds. In addition, for every $E$-class $C\subset A$, $\pstab(a)\subset\stab(C)$, so $B$ is even well-ordered in $W[[X]]$. By the axiom of well-ordered choice in $W[[X]]$, there is a selector $f$ on $B$. 

Let $b\in I$ be such that $\pstab(b)\subset\stab(f)$ and $a\subset b$. Use the dynamical closure assumption on the ideal to icrease $b$ if necessary so that $\forall x\in X\setminus b\ \exists\gamma\in\pstab(b)\ \gamma\cdot x\neq x$. I claim that $b$ is $a$-large. Indeed, suppose that $c\in I$ is a set. Let $\leq$ be a well-ordering on $c$, let $C\in B$ be the $\pstab(a)$-orbit of $\langle c, \leq\rangle$, and consider the value $f(C)$. Since $\pstab(b)\subset\stab(f, C)$, $\pstab(b)\subset\stab(f(C))$ holds. Now, $f(C)=\langle \gamma\cdot c, \gamma\cdot\leq\rangle$ for some $\gamma\in \pstab(a)$; since $\gamma\cdot\leq$ is a well-ordering on $\gamma\cdot c$, a transfinite induction argument along it shows that $\forall x\in \gamma\cdot c\ \pstab(b)\subset\stab(x)$. Since $b$ is dynamically closed, $\gamma\cdot c\subset b$. This concludes the proof that $b$ is $a$-large; (2) follows.
\end{proof}

\noindent There is a very natural concept intimately related to cofinal orbits.

\begin{definition}
A dynamical ideal $\Gamma\acts X, I$ \emph{supports a cone measure} if in the associated permutation model $W[[X]]$ there is a set $C\subset I$ which is cofinal with respect to inclusion and such that for every set $D\subset C$ in $W[[X]]$ there is $a\in I$ such that the set $\{b\in C\colon a\subseteq b\}$ is either a subset of $D$, or it is disjoint from $D$. The set $C$ is referred to as a \emph{support of the cone measure}.
\end{definition}

\noindent In other words the ideal supports a cone measure if in the associated permutation model, the cone filter on some cofinal subset of $I$ is an ultrafilter. While I do not have an attractive restatement of this property which does not refer to the permutation model, there is an implication towards the cofinal orbits.

\begin{theorem}
\label{conetheorem}
If a dynamical ideal $\Gamma\acts X, I$ supports a cone measure, then it has cofinal orbits.
\end{theorem}

\begin{proof}
Suppose that $C\subset I$ is a cofinal set in the permutation model supporting a cone measure. Suppose that $a\in I$ is any set; I must find an $a$-large set $b$. Enlarging $a$ if necessary, assume that $\stab(C)\subseteq\pstab(a)$. Towards a contradiction, assume that there is no $a$-large set $b\in I$. Working in $V[[X]]$, by transfinite recursion on ordinal $\ga$ build sets $b_\ga\in C$ such that, writing $B_\ga$ for the $\pstab(a)$-orbit of $b_\ga$, either $\bigcup_{\gb\in\ga}B_\gb\subset C$ is cofinal in $I$ (in which case the construction terminates), or $\bigcup_{\gb\in\ga}B_\gb$ does not contain any superset of $b_\ga$. In addition, require that $b_\ga\subseteq b_{\ga+1}$.

The construction clearly has to terminate at some point, since the sets $B_\ga\subset I$ are pairwise disjoint. Since there is no $a$-large set, no set $B_\ga$ is cofinal, therefore the termination stage must be a limit ordinal, call it $\gamma$. Consider the sets $C_0=\bigcup\{B_\ga\colon\ga\in\gg$ is even$\}$ and $C_1=\bigcup\{B_\ga\colon\ga\in\gg$ is odd$\}$.
The sets $C_0$ and $C_1$ are $\pstab(a)$-invariant subsets of $C$, therefore they belong to the permutation model. They are clearly disjoint. The requirement $b_\ga\subseteq b_{\ga+1}$ implies that the downward closure of the sets $C_0$ and $C_1$ is the same. Since $C_0\cup C_1\subseteq C$ is cofinal, it must be the case that the disjoint sets $C_0, C_1\subset C$ are both  cofinal, an impossibility if the cone filter on $C$ is to be an ultrafilter.
\end{proof}

\noindent Together with Theorem~\ref{cofinaltheorem}, Theorem~\ref{conetheorem} shows that in permutation models, existence of cone measure on some cofinal subset of $I$ implies the axiom of well-ordered choice. This is a curious implication, as its antecedent contradicts AC as soon as the ideal is not generated by a single set, while its decedent is a consequence of AC. I do not know if the reverse implication can fail for a dynamically complete ideal which does not contain the whole set $X$. All examples below support a cone measure, even though I identify its support only in the first one, where it relates to a famous theorem of geometric topology.

\begin{example}
\label{boundedexample}
Let $X$ be any Euclidean space, let $\Gamma$ be the Polish group of self-homeomorphisms of $X$, and let $I$ be the ideal of bounded subsets of $X$. Then $I$ has cofinal orbits. In dimension one, this is \cite[Model N47]{howard:ac}, in which apparently the status of well-ordered choice has not been known. The dynamical ideal supports a cone measure on the set $C=\{a\in I\colon\exists\gamma\in\Gamma\ \gamma\cdot a$ is the unit ball$\}$. To see that the cone filter on $C$ is an ultrafilter, suppose that $D\subset C$ is a set in the permutation model, and let $a\in I$ be a set such that $\stab(D)\subseteq\pstab(a)$. Enlarging the set $a$ if necessary, assume that it is equal to the closed ball $B_n$ of radius $n$ around the origin for some number $n$. Let $b_0, b_1\in C$ be some sets such that $B_{n+1}\subset b_0\cap b_1$. It will be enough to show that there is a homeomorphism $\gamma\in\pstab(a)$ such that $\gamma\cdot b_0=b_1$: the invariance of the set $D$ will then imply that $D$ contains either all or no sets in $C$ which contain $B_{n+1}$. Now, the annulus theorem \cite{kirby:annulus} shows that the sets $b_0\setminus B_n$ and $b_1\setminus B_n$ are both homeomorphic to the annulus $B_{n+1}\setminus B_n$. This allows a construction of homeomorphisms $\gamma_0, \gamma_1\in\pstab(a)$ such that $\gamma_0\cdot (B_{n+1}\setminus B_n)=b_0\setminus a$ and $\gamma_1\cdot B_{n+1}\setminus B_n)=b_1\setminus a$. The composition $\gamma_1\gamma_0^{-1}\in\pstab(a)$ then transports $b_0$ to $b_1$ as required.
\end{example}

\begin{example}
\cite[Model N33]{howard:ac} Let $X=\mathbb{Q}$ with its ordering, let $\Gamma$ be the Polish group of all automorphisms of $X$, and let $I$ be the ideal of bounded subsets of $X$. Then $I$ has cofinal orbits.
\end{example}

\begin{example}
\label{qcofinalexample}
Let $X=\mathbb{Q}$ with its ordering, let $\Gamma$ be the Polish group of all automorphisms of $X$, and let $I$ be the ideal of nowhere dense subsets of $X$. Then $I$ has cofinal orbits.
\end{example}

\begin{proof}
The argument consists of several simple back and forth constructions encapsulated in the following claims.

\begin{claim}
\label{easyclaim}
Let $d_0, d_1\subset\mathbb{Q}$ be two sets such that both they and their complements are dense in $\mathbb{Q}$. Then there is an order-preserving permutation $\pi$ of $\mathbb{Q}$ such that $\pi''d_0=d_1$.
\end{claim}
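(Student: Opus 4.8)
The plan is to run a standard back-and-forth construction, regarding $(\mathbb{Q},<)$ equipped with the unary predicate ``lies in $d_0$'' on the domain side and ``lies in $d_1$'' on the range side. First I would fix an enumeration $\mathbb{Q}=\{q_n\colon n\in\gw\}$ and build an increasing chain of finite partial maps $p_0\subseteq p_1\subseteq\cdots$, each of which is order-preserving and \emph{color-preserving} in the sense that $a\in d_0\liff p_n(a)\in d_1$ for every $a$ in its (finite) domain. The permutation I want is then $\pi=\bigcup_n p_n$: order preservation passes to the union, the alternating construction below makes $\pi$ a bijection of $\mathbb{Q}$, and color preservation in the limit together with surjectivity yields $\pi''d_0=d_1$.

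The heart of the argument is the extension step, which I would isolate as follows. Suppose $p$ is a finite color-preserving order-isomorphism and $a\in\mathbb{Q}\setminus\dom(p)$ is given; I claim $p$ extends to such a map with $a$ in its domain. The point $a$ lies in exactly one of the gaps cut out by $\dom(p)$ — either below all of $\dom(p)$, above all of it, or strictly between two consecutive elements — and the corresponding gap on the range side is a nonempty open interval or unbounded ray $U\subseteq\mathbb{Q}$. I then need a point $b\in U$ with the matching color, i.e.\ with $b\in d_1$ if $a\in d_0$ and $b\notin d_1$ otherwise, and set $p'=p\cup\{\langle a,b\rangle\}$. The symmetric ``back'' step, adjoining a prescribed point to the range, is identical with the roles of $d_0$ and $d_1$ interchanged.

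This is exactly where the full hypothesis is used, and it is the only real obstacle. Because both $d_1$ and its complement $\mathbb{Q}\setminus d_1$ are dense, each of them meets every nonempty open interval of $\mathbb{Q}$; hence whichever of the two colors is required, a witness $b$ of that color can be found inside the gap $U$. (Density of $d_0$ and of $\mathbb{Q}\setminus d_0$ plays the same role in the back step.) Without complement-density a gap could contain only points of one color and the extension would fail, so the hypothesis is genuinely needed rather than cosmetic.

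It remains only to drive the bookkeeping: at stage $2n$ I perform a forth step to force $q_n\in\dom(p_{2n+1})$, and at stage $2n+1$ a back step to force $q_n\in\rng(p_{2n+2})$. This guarantees $\dom(\pi)=\rng(\pi)=\mathbb{Q}$, so $\pi$ is an order-preserving permutation of $\mathbb{Q}$, and by construction $\pi''d_0=d_1$. All steps beyond the extension lemma are routine.
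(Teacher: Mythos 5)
Your proof is correct and is exactly the argument the paper intends: the paper states Claim~\ref{easyclaim} without proof, describing it as one of several ``simple back and forth constructions,'' and your color-preserving back-and-forth (using density of $d_1$ and its complement for the forth step, and of $d_0$ and its complement for the back step) is precisely that construction. Nothing is missing.
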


\begin{claim}
Let $a\subset\mathbb{Q}$ be a nowhere dense set. Then $a$ can be extended to a nowhere dense set which is in addition closed, coinitial and cofinal in $\mathbb{Q}$, and such that no point of it is isolated from left or right.
\end{claim}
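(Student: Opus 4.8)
The plan is to normalize $a$ and then fill in the gaps of the normalized set with fixed ``gadgets''. First I would replace $a$ by its closure in $\mathbb{Q}$: a subset of $\mathbb{Q}$ is nowhere dense in $\mathbb{Q}$ exactly when its real closure has empty interior, so this preserves nowhere density while making $a$ closed. Next I would make $a$ unbounded in both directions, adjoining (if necessary) a discrete set such as the integers lying outside the convex hull of $a$; integers are closed in $\mathbb{Q}$ and accumulate only at $\pm\infty$, so the union stays closed and nowhere dense, and it is now coinitial and cofinal. Call the result $a'$; it is closed, nowhere dense, unbounded both ways, and contains $a$. Its complement is a dense open subset of $\mathbb{Q}$, a disjoint union of maximal convex pieces (``gaps''), each of the form $(u,v)\cap\mathbb{Q}$ with $u<v$ finite reals (finiteness uses unboundedness of $a'$). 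I would then insert into each gap $G=(u,v)$ a gadget $S_G\subseteq(u,v)\cap\mathbb{Q}$ and set $b=a'\cup\bigcup_G S_G$.

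The gadget is required to be closed in $(u,v)\cap\mathbb{Q}$, nowhere dense, to satisfy $\inf S_G=u$ and $\sup S_G=v$ (so it accumulates to both endpoints of the gap), to have every point a two-sided limit of $S_G$, and to have all of its \emph{own} complementary-interval endpoints irrational. To build it, fix a Cantor scheme of closed intervals $\{I_s\colon s\in 2^{<\gw}\}$ inside $[u,v]$ with $I_\emptyset=[u,v]$, shrinking diameters, all interior splitting points chosen \emph{irrational}, and the extreme endpoints $u,v$ retained along the leftmost and rightmost branches; let $C_G=\bigcap_n\bigcup_{|s|=n}I_s$. Then $C_G$ is perfect and nowhere dense with $\min C_G=u$, $\max C_G=v$, and every complementary interval of $C_G$ inside $(u,v)$ has irrational endpoints. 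While building the scheme I would also plant, for each node $s$, a rational $r_s\in I_s$ kept inside a dedicated descending sub-branch so that $r_s\in C_G$; since the $I_s$ form a neighborhood basis at each point of $C_G$ and $\diam I_s\to 0$, this makes $C_G\cap\mathbb{Q}$ dense in $C_G$. Finally set $S_G=(C_G\cap\mathbb{Q})\setminus\{u,v\}$.

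It then remains to verify the properties of $b$. \emph{Two-sided limits}: each point of $S_G$ is a rational, hence a non-endpoint point of $C_G$, hence a two-sided limit of $C_G$; by density of $C_G\cap\mathbb{Q}$ the approximating points can be taken rational, so it is a two-sided limit of $S_G$. A point $x\in a'$ with a gap on one side has the adjacent gadget accumulating to it from that side, while on a gapless side $a'$ itself accumulates to $x$; so every point of $a'$ is also a two-sided limit of $b$. \emph{Closedness}: any rational outside $b$ lies in some gap $G$ and, being outside $S_G=C_G\cap\mathbb{Q}\cap(u,v)$, lies in a complementary interval of $C_G$ with irrational endpoints, which is therefore a neighborhood in $\mathbb{Q}$ missing $b$ (the endpoints $u,v$ are either irrational, hence irrelevant, or in $a'\subseteq b$). \emph{Nowhere density}: given a rational interval, first shrink it to avoid the nowhere dense $a'$, landing it inside a single gap $G$, then shrink again to avoid the nowhere dense $S_G$; the result misses $b$. \emph{Coinitiality, cofinality}, and $b\supseteq a$ are inherited from $a'$.

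The main obstacle is the gadget: one must simultaneously keep all interior gap-endpoints of $C_G$ irrational (so that no point of $b$ is one-sidedly isolated) \emph{and} force the rationals to be dense in $C_G$ (so that the two-sided limits witnessing non-isolation live inside $S_G$, not merely inside its real closure). Planting the rationals $r_s$ along dedicated branches is exactly what reconciles these two competing demands; the remaining verifications are routine facts about closed nowhere dense sets.
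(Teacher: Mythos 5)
Your proposal is correct, and in fact it supplies an argument the paper never gives: this claim is one of the ``simple back and forth constructions'' that the paper states without proof (of the three claims in that example, only Claim~\ref{triclaim} is proved), so there is no official argument to match yours against. Your decomposition---close $a$ off, adjoin a discrete unbounded set, then fill each gap $(u,v)$ of the resulting set $a'$ with a gadget accumulating to both endpoints---is what any proof must do, and you isolate the real crux correctly: the endpoints of the complementary intervals of the final set must be irrational, while the rationals belonging to the set must be dense in its real closure. Where you build the gadget by a Cantor scheme in $\mathbb{R}$, there is a purely order-theoretic realization that is shorter and closer in spirit to the paper's back-and-forth framing: let $D\subset\mathbb{R}$ be a countable dense set of irrationals, let $X=(\mathbb{Q}\times\{0\})\cup(D\times\mathbb{Q})$ ordered by first coordinate (as reals) and then by second coordinate, and let $S=\mathbb{Q}\times\{0\}$. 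Then $X$ is a countable dense linear order without endpoints, and $S$ is closed, nowhere dense, unbounded in both directions, and has no one-sidedly isolated points. Each gap of $a'$ is itself a countable dense order without endpoints, so Cantor's back-and-forth theorem gives an order isomorphism of $X$ onto the gap; since order isomorphisms of open convex pieces of $\mathbb{Q}$ are homeomorphisms and preserve cofinality and coinitiality, transporting $S$ into every gap yields your $S_G$, and all of your remaining verifications go through verbatim. That route trades your metric bookkeeping for one appeal to Cantor's theorem.

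That bookkeeping is also the one spot where your write-up is genuinely underspecified, and a careless reading of it fails. ``Kept inside a dedicated descending sub-branch'' does not by itself cohere with ``shrinking diameters'': each child interval in your scheme shares one endpoint with its parent, so along a branch that is eventually constant (say, eventually always the left child) the left endpoints freeze; if a planted rational $r_s$ rides such a branch, every interval along it contains the nondegenerate segment between the frozen endpoint and $r_s$, hence $C_G$ contains an interval and is not even nowhere dense---the two requirements are jointly unsatisfiable for that choice of branch. The repair is a concrete discipline: maintain exactly one committed rational per node; at each split, send it into the child anchored at the endpoint nearer to it, placing the irrational cut just beyond it, and cut the other child back to at most half of the parent, planting a fresh rational in its interior. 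Then every child has diameter at most a fixed fraction of its parent's, all cut points are irrational, and every node's interval meets $C_G\cap\mathbb{Q}$, which is exactly what your argument needs. With that (or any similar) specification your proof is complete and sound.
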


\noindent Nowhere dense sets satisfying the properties spelled out in the last claim will be called \emph{good}. They form the support of the cone measure in the permutation model.

\begin{claim}
\label{triclaim}
Let $d_0, d_1\subset\mathbb{Q}$ be two nowhere dense good sets. Then there is an order-preserving permutation $\pi$ of $\mathbb{Q}$ such that $\pi''d_0=d_1$.
\end{claim}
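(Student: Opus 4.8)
The plan is to prove the stronger statement that the two first-order structures $(\mathbb{Q},<,d_0)$ and $(\mathbb{Q},<,d_1)$ are isomorphic, via a back-and-forth construction; the resulting isomorphism is then the desired order-preserving permutation $\pi$. Before running the back-and-forth I would record the combinatorial anatomy of a good set $d$. Writing the complement $\mathbb{Q}\setminus d$ as the union of its maximal convex pieces (the \emph{gaps}), I would first check that every gap is a nonempty open interval of rationals whose endpoints are irrational cuts: a rational endpoint lying in $d$ would be isolated on one side, contradicting goodness, while a rational endpoint lying in the complement would contradict maximality of the gap. From this together with the hypotheses (closed, nowhere dense, coinitial, cofinal, no one-sidedly isolated points) I would extract the four facts the back-and-forth will consume: $d$ has no endpoints and is both coinitial and cofinal; $d$ is a \emph{dense} linear order, since no point of $d$ is isolated from either side and hence further points of $d$ lie between any two of them; the complement of $d$ is dense; and two distinct gaps are always separated by a point of $d$, so that the gaps themselves form a dense linear order interleaved with $d$-points.

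The essential point — and the reason a naive back-and-forth fails — is that $d$ is nowhere dense, so a $d$-point may \emph{not} be inserted into an interval that happens to lie inside a single gap. I would therefore run the back-and-forth on the enriched finite approximations $(F,<,d\restriction F,\sim)$, where for consecutive points $u<v$ of $F$ the relation $\sim$ records whether $(u,v)$ meets $d$ (call the interval \emph{separated}) or is disjoint from $d$ (\emph{same gap}; note this forces $u,v\notin d$). Enumerating each copy of $\mathbb{Q}$ and alternately feeding points into the domain and into the range, at each stage I must realize the correct one-point type of the new point $p$ over the current finite set: its location in an interval $(u,v)$ (or an end-ray), its membership in $d$, and the separation data of $(u,p)$ and $(p,v)$, subject to the obvious consistency constraints — a $d$-point can be added only inside a separated interval, and inside a same-gap interval only a further same-gap non-$d$ point can appear.

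The one-point extension lemma is then a short case analysis verifying that a good set is rich enough to realize every consistent type. Adding a non-$d$ point is immediate from density of the complement; adding a point inside a same-gap interval is immediate since that interval is a nonempty piece of rationals disjoint from $d$; adding a $d$-point inside a separated interval is possible because such an interval meets $d$ by the very definition of separation; and the end-ray cases follow from coinitiality and cofinality. The single delicate case — which I expect to be the main obstacle — is realizing, inside a separated interval $(u,v)$, a \emph{fresh} gap flanked on both sides by $d$-points, so that both $(u,p)$ and $(p,v)$ become separated. Here I would use that $d$ is dense-in-itself with two-sided limits together with the density of the gaps: choose two $d$-points $w<w'$ in $(u,v)$, observe that $(w,w')$ is not contained in $d$ and hence contains a gap $G''$, and place $p\in G''$; then $w\in(u,p)$ and $w'\in(p,v)$ witness that both subintervals are separated. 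Once the extension lemma is in hand in both directions, the standard back-and-forth bookkeeping yields an order-isomorphism $\pi$ of $\mathbb{Q}$ with $\pi''d_0=d_1$, which proves the claim.
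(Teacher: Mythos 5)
Your proof is correct, but it takes a genuinely different route from the paper's. The paper never runs a back-and-forth on $\mathbb{Q}$ itself: it collapses the picture by forming the countable linear order $Y_0$ whose elements are the points of $d_0$ together with the inclusion-maximal intervals of $\mathbb{Q}$ disjoint from $d_0$ (similarly $Y_1$), observes that goodness makes $Y_0$ dense without endpoints with both $d_0$ and $Y_0\setminus d_0$ dense in it, applies Claim~\ref{easyclaim} to get an order isomorphism $\theta\colon Y_0\to Y_1$ with $\theta''d_0=d_1$, and then re-expands each gap $i$ via an arbitrary order isomorphism $\pi_i\colon i\to\theta(i)$, gluing these to $\theta\restriction d_0$. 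Your enriched finite conditions are exactly the bookkeeping that this quotient makes automatic: your ``separation data'' records which elements of $Y_0$ lie between points of $F$, and your delicate case --- inserting a fresh gap flanked by $d$-points inside a separated interval --- is precisely the density of $Y_0\setminus d_0$ in $Y_0$, which the paper needs too and which you prove by the same device (two nearby $d$-points plus nowhere density). The paper's route buys modularity and brevity: the back-and-forth is delegated wholesale to Claim~\ref{easyclaim}, and the case analysis collapses into two density statements about $Y_0$. Your route buys self-containment: you never have to check that $Y_0$ is a well-defined countable dense order, that each gap is order-isomorphic to $\mathbb{Q}$, or that the glued map is a bijection of $\mathbb{Q}$. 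One soft spot in your write-up: the mixed one-point types (a non-$d$ point with one flanking interval same-gap and the other separated) are not really ``immediate from density of the complement'' --- realizing them requires placing the new point inside the gap containing the relevant endpoint, using that gaps are open intervals and are bounded by any $d$-point witnessing separation on the other side. Both facts are in your anatomy paragraph, so the case is routine, but it should be stated rather than absorbed into the generic non-$d$ case.
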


\begin{proof}
Let $Y_0$ be the linearly ordered set whose universe consists of elements of $d_0$ and inclusion-maximal intervals of $\mathbb{Q}$ disjoint from $d_0$ and the ordering is the natural one inherited from $\mathbb{Q}$; the same for subscript $1$.
The goodness of the set $d_0$ implies that the linear ordering on $Y_0$ is dense without endpoints, $d_0\subset Y_0$ is dense, and so is $Y_0\setminus d_0\subset Y_0$. Claim~\ref{easyclaim} shows that there is an order-preserving bijection $\theta\colon Y_0\to Y_1$ such that $\theta''d_0=d_1$. For each interval $i\in Y_0\setminus d_0$, let $\pi_i\colon i\to \theta(i)$ be an order-preserving bijection. In the end, $\pi=(\theta\restriction d_0)\cup\bigcup\{\pi_i\colon i\in Y_0\setminus d_0\}$ is the desired permutation of $\mathbb{Q}$.
\end{proof}

Finally, let $a\subset X$ be a nowhere dense set; I must produce an $a$-large nowhere dense set $b$. Enlarging $a$ if necessary, I may assume that $a$ is good. Write $J$ for the set of all inclusion-maximal intervals of $X$ disjoint from $a$. Let $b\subset X$ be a good nowhere dense set such that $a\subset b$ and in each $i\in J$ the set $b$ is both coinitial and cofinal. I will show that $b$ is $a$-large.

To prove this, let $c\subset X$ be a nowhere dense set. Enlarging $c$ if necessary, I may assume that it contains $a$, it is good, and in each interval $i\in J$ it is coinitial and cofinal. For each interval $i\in J$ use Claim~\ref{triclaim} to find an order preserving permutation $\theta_i$ of $i$ such that $\theta_i''(b\cap i)=c\cap i$. Then $\pi=(\id\restriction a)\cup\bigcup\{\theta_i\colon i\in J\}$ is an order-preserving permutation of $X$ which is constant on $a$ and moves $b$ to $c$.
\end{proof}

\begin{example}
\label{cardinalitycofinalexample}
\cite[Model N12($\kappa$)]{howard:ac} Let $\kappa$ be an uncountable cardinal, let $X$ be a set of cardinality at least $\kappa$, let $\Gamma$ be the group of all permutations of $X$ acting by application, and let $I_\kappa$ be the ideal of subsets of $X$ of cardinality smaller than $\kappa$. Then $\Gamma\acts X, I_\kappa$ has cofinal orbits iff $\kappa$ is a successor cardinal.
\end{example}

\begin{proof}
If $\kappa$ is a successor cardinal, $\kappa=\lambda^+$, then for any set $a\in I_\kappa$ any set $b$ such that $a\subset b$ and $|b\setminus a|=\lambda$ has a cofinal orbit under the action of $\pstab(a)$. On the other hand, if $\kappa$ is a limit cardinal, then given a set $b\in I_\kappa$, its orbit consists of sets of cardinality $|b|$ only, but there are sets of cardinality greater than $|b|$ in the ideal $I_\kappa$. Thus, in that case $I_\kappa$ fails to have cofinal orbits.
\end{proof}

\begin{example}
Let $X$ be a set and let $I$ be an ideal on it. Say that $I$ is \emph{uniform} if there is an infinite cardinal $\kappa$ such that $I$ is generated by sets of cardinality $\kappa$, and for every set $a\in I$ there is a set $b\in I$ of cardinality $\kappa$ which is disjoint from $a$. If $I$ is a uniform ideal on $X$ and $\Gamma$ is the group of all permutations of $X$ with support in $I$ acting on $X$ by application, then $\Gamma\acts X, I$ has cofinal orbits.
\end{example}

\begin{proof}
Let $a\in I$ be an arbitrary set, and let $b\in I$ be a set disjoint from $I$ of the uniform cardinality $\kappa$. It will be enough to show that the $\supp(a)$-orbit of $a\cup b$ is cofinal. To this end, let $c\in I$ be an arbitrary set; enlarging $c$ if necessary, we may assume that $a\cup b\subset c$. Let $\pi\colon b\to c\setminus a$ be any bijection. Let $d=c\setminus b$. The uniformity assumption shows that there are pairwise disjoint sets $d_n$ for $n\in\gw$ such that $d_0=d$, all $d_n$ for $n>0$ are disjoint from $c$ and have cardinality $|d|$, and $\bigcup_nd_n\in I$.
For each $n\in\gw$ let $\pi_n\colon d_n\to d_{n+1}$ be a bijection. Then $\pi\cup\bigcup_n\pi_n$ is a permutation of a set in $I$ disjoint from $a$. Let $\gamma$ be the permutation of $X$ extending $\pi\cup\bigcup_n\pi_n$ by the identity outside of its domain. Then $\gamma\in\pstab(a)$ and $c\subset\gamma\cdot (a\cup b)$ as desired.
\end{proof}

\begin{example}
Let $n\geq 1$ be a number, let $X=\mathbb{R}^n$, let $I$ be the ideal of nowhere dense subsets of $X$, and let $\Gamma$ be the group of self-homeomorphisms of $X$ acting by application. Then the dynamical ideal $\Gamma\acts X, I$ has cofinal orbits. The $0$-large set is a variation of the Sierpinski carpet in $n$-dimensions. The Sierpinski carpets form a support for the cone measure in the permutation model  \cite{young:personal}.
\end{example}

\begin{example}
Let $X=\mathbb{R}^2$, let $I$ be the ideal of closed sets of dimension zero, and let $\Gamma$ be the group of all selfhomeomorphisms of $X$ acting by application. Then $\Gamma\acts X, I$ has cofinal orbits; the Cantor sets in the plane form the support of the cone measure  \cite{young:personal}.
\end{example}

\begin{example}
\label{countablenoncofinalexample}
Let $X$ be an completely metrizable space without isolated points, let $\Gamma$ be its self-homeomorphism group acting by application, and let $I$ be the ideal generated by countable closed subsets of $X$. The ideal does not have cofinal orbits: if $b\in I$ is a closed set, it has countable Cantor--Bendixson rank $\ga$, and its orbit consists of sets of rank $\ga$ only. However, there are sets in $I$ of higher Cantor--Bendixson rank. 

Now suppose that the space $X$ is the Euclidean space. The axiom of well-ordered (in fact, $\gw_1$) choice fails in the associated permutation model. Consider the sequence $\langle C_\ga\colon\ga\in\gw_1\rangle$ where $C_\ga$ is the set of closed countable subsets of $X$ of Cantor--Bendixson rank $\ga$. It is clear that the sequence is invariant under the group action and therefore belongs to the permutation model. I claim that the sequence has no selector in the permutation model. Suppose towards a contradiction that there is such a selector $s=\langle a_\ga\colon\ga\in\gw_1\rangle$, and let $b\subset X$ be a countable closed set such that $\pstab(b)\subset\stab(s)$. Select a countable ordinal $\ga$ greater than the Cantor--Bendixson rank of $b$, and a point $x\in a_\ga\setminus b$. The homogeneity properties of the Euclidean space imply that the $\pstab(b)$-orbit of $x$ contains a nonempty open set; in particular, it is uncountable. However, it should be a subset of the countable set $a_\ga$, a contradiction.
\end{example}

\section{The axiom of dependent choices}
\label{dcsection}

The axiom of dependent choices is one of the most commonly considered fragments of AC, partly because it allows an uneventful development of mathematical analysis and descriptive set theory. It is implied by well-ordered choice \cite{jech:cardinals} and it implies countable choice.

\begin{definition}
\cite[Form 43]{howard:ac} The \emph{axiom of dependent choices}, DC, is the statement that in every partial order there is either a minimal element or an infinite strictly descending sequence.
\end{definition}

There is an essentially equivalent restatement of DC in terms of dynamical properties of ideals. A version of this criterion for symmetric models has been considered by Karagila and Schilhan \cite{karagila:dc}.

\begin{definition}
\label{dccompletedefinition}
A dynamical ideal $\Gamma\acts X, I$ is \emph{DC-complete} if for every sequence $\langle a_n\colon n\in\gw\rangle$ of sets in the ideal $I$ there is a sequence $\langle \gg_n\colon n\in\gw\rangle$ of group elements such that $\gg_0\in\pstab(a_0)$, for every $n\in\gw$ and every $x\in a_n$ $\gg_n\cdot x=\gg_{n+1}\cdot x$, and $\bigcup_n\gamma_n\cdot a_n\in I$ holds.
\end{definition}

\begin{theorem}
\label{dctheorem}
Let $\Gamma\acts X, I$ be a dynamical ideal.

\begin{enumerate}
\item If the ideal is DC-complete, then the associated permutation model satisfies DC;
\item if the ideal is dynamically closed and the permutation model satisfies DC, then the ideal is DC-complete.
\end{enumerate}
\end{theorem}

\begin{proof}
For (1), assume that the ideal is DC-complete, and let $\langle P, \leq\rangle$ be a poset in the permutation model $W[[X]]$ with no minimal element. In the ambient model $V[[X]]$, pick an infinite strictly descending sequence $\langle p_n\colon n\in\gw\rangle$ of elements of $P$. Pick an inclusion-increasing sequence $\langle a_n\colon n\in\gw\rangle$ of sets in $I$ such that $\pstab(a_n)\subset\stab(P, \leq, p_n)$, and use the DC-completeness assumption to find a sequence $\langle\gamma_n\colon n\in\gw\rangle$ of group elements such that $\gamma_0\in\pstab(a_0)$, for every $n\in\gw$ $\gg^{-1}_{n+1}\gg_n\in\pstab(a_n)$, and $b=\bigcup_n\gamma_n\cdot a_n\in I$ holds. It will be enough to show that the sequence $\vec q=\langle \gg_n\cdot p_n\colon n\in\gw\rangle$ is an infinite strictly decreasing sequence in $P$ in the model $W[[X]]$.

To show that the sequence $\vec q$ belongs to the permutation model, it is enough to argue that $\pstab(b)\subseteq\stab(\vec q)$. To see that, note that for every number $n$ $\pstab(a_n)\subseteq\stab(p_n)$ holds by the choice of $a_n$, $\pstab(\gamma_n\cdot a_n)\subseteq\stab(\gamma_n\cdot p_n)$ by support invariance (Proposition~\ref{siproposition}(3)) and $\pstab(b)\subseteq\pstab(\gamma_n\cdot a_n)$ as $\gamma_n\subseteq b$. Thus elements of $\pstab(b)$ fix every entry of the sequence $\vec q$, so they fix the whole sequence $\vec q$.

To show that the sequence $\vec q$ is a strictly descending sequence in $P$, first note that for every number $n$, $\gg_n\in\pstab(a_0)$. (This is proved by induction on $n$, the base step being trivial, and the induction step relying on the identity $\gg_{n+1}=\gg_n (\gg_n^{-1}\gg_{n+1})$ and the fact that $\gg_n^{-1}\gg_{n+1}\in\pstab(a_n)\subset\pstab(a_0)$ holds.) It follows that the $n$-th entry $\gg_n\cdot p_n$ of the sequence $\vec q$ is in $P$, since $\gg_n\in\pstab(a_0)\subset\stab(P)$ and $p_n\in P$. Finally, $p_{n+1}<p_n$ implies $\gg_n^{-1}\gg_{n+1}\cdot p_{n+1}<\gg_n^{-1}\gg_{n+1}\cdot p_n=p_n$, which inequality provides $\gg_{n+1}\cdot p_{n+1}<\gg_n\cdot p_n$. This shows that $\vec q$ is in fact a  strictly decreasing sequence in $P$.

For (2), assume that the ideal is dynamically closed, the permutation model satisfies DC, let $\langle a_n\colon n\in\gw\rangle$ is a sequence of sets in the ideal $I$ and work to find a sequence of group elements with the properties of Definition~\ref{dccompletedefinition}. Enlarging the sets $a_n$ if necessary, assume that they are increasing with respect to inclusion. Pick a sequence $\langle \leq_n\colon n\in\gw\rangle$ such that $\leq_n$ is a well-ordering on $a_n$ and $\leq_{n+1}$ is an end-extension of $\leq _n$. Let $T$ be the tree of all finite sequences $t=\langle t(i)\colon i\in n\rangle$ such that $n$ is a number and there exists a sequence (a \emph{witness} for $t\in T$) $\langle \gg_i\colon i\in n\rangle$ such that $\gg_0\in\pstab(a_0)$, for each $i\in n-1$ $\gg_{i+1}^{-1}\gg_i\in\pstab(a_i)$, and $t(i)=\gg_i\cdot \langle a_i, \leq_i\rangle$.

\begin{claim}
$T\in W[[X]]$.
\end{claim}

\begin{proof}
For every number $i\in\gw$, the pair $\langle a_i, \leq_i\rangle$ belongs to $W[[X]]$, since all elements of $\pstab(a_i)$ fix it. This means that for every group element $\gamma\in\Gamma$, $\gamma\cdot\langle a_i, \leq _i\rangle$ is in the model $W[[X]]$. It in turn follows that elements of $T$ are finite sequences of elements of the model $W[[X]]$. Thus, $T\subset W[[X]]$ holds, and it will be enough to show that $\pstab(a_0)\subseteq\stab(T)$.

To this end, let $t=\langle t(i)\colon i\in n\rangle\in T$ as witnessed by a sequence $\langle \gg_i\colon i\in n\rangle$, and let $\gd\in\pstab(a_0)$ be any element. It will be enough to show that $\gd\cdot t(i)$ as witnessed by the sequence $\langle\gd\gg_i\colon i\in n\rangle$. To see this, first note that $\gd\cdot t(i)=\gd\gg_i\cdot \langle a_i, \leq_i\rangle$ as the group $\Gamma$ acts on $W[[X]]$ by $\in$-automorphisms. Also note that $\gd\gg_0\in\pstab(a_0)$ as $\pstab(a_0)\subset\Gamma$ is a subgroup, and $(\gd\gg_{i+1})^{-1})(\gd\gg_i)=\gg_{i+1}^{-1}\gg_i\in\pstab(a_i)$ by the choice of the group elements $\gg_i$. The claim follows.
\end{proof}

\begin{claim}
$T$ has no minimal element.
\end{claim}

\begin{proof}
If $t=\langle t(i)\colon i\in n\rangle\in T$ is a sequence with a witness $\langle \gg_i\colon i\in n\rangle$, then it can be extended by the pair $\gg_{n-1}\cdot \langle a_{n}, \leq_{n}\rangle$ and the resulting longer sequence will be in $T$ as witnessed by $\langle \gg_i\colon i\in n, \gg_{n-1}\rangle$.
\end{proof}

\noindent By DC in the permutation model $W[[X]]$, the tree $T$ has an infinite branch $b\in W[[X]]$, which is viewed as an infinite sequence such that for every $n\in\gw$, $b\restriction n\in T$.

\begin{claim}
There is an infinite sequence $c$ such that for each $n\in\gw$, $c\restriction n$ witnesses the statement $b\restriction n\in T$.
\end{claim}

\noindent This sequence $c$ need not be in the permutation model.

\begin{proof}
The sequence $c$ is built by recursion on $n$. Suppose that $c\restriction n$ has been built, and it is a witness to $b\restriction n\in T$. Let $\langle\gd_i\colon i\in n+1\rangle$ be a witness to $b\restriction n+1\in T$. It will be enough to show that $c^\smallfrown \gd_n$ is a witness to $b\restriction n+1\in T$. The only nontrivial point is to prove that $\gd_n c(n-1)^{-1}\in\pstab(a_{n-1})$ holds. To see this, note that $\gd_n c(n-1)^{-1}=(\gd_n\gd_{n-1}^{-1})(\gd_{n-1}c(n-1)^{-1})$ and here the former parenthesis is in $\pstab(a_n)$ as the $\gd$'s witness $b\restriction n+1\in T$, and the latter parenthesis is in $\pstab(a_n)$ since it fixes $\leq_n$ and $\leq_n$ is a well-ordering on $a_n$.
\end{proof}

Now, write $\gg_i=c(i)$ and consider the sets $\gg_i\cdot a_i$, and the well-orderings $\gg_i\cdot \leq_i$ for every number $i\in\gw$. For each $i\in\gw$, $\leq_{i+1}$ is a well-ordering which end-extends $\leq_i$. Acting on this statement with $\gg_i^{-1}\gg_{i+1}$ (which fixes $a_i$ pointwise and therefore fixes $\leq_i$) and then with $\gg_i$, it becomes clear that $\gg_{i+1}\cdot \leq_{i+1}$ is a well-ordering which end-extends $\gg_i\cdot\leq_i$. It follows that the set $\bigcup_i\gg_i\cdot a_i$ (which is in $W[[X]]$ as the union of the first coordinates of the sequence $b$) is well-ordered by the relation $\bigcup_i\gg_i\cdot\leq_i$ (which is in $W[[X]]$ as the union of the second coordinates of the sequence $b$). By the dynamical closure of the ideal $I$, it follows that the set $\bigcup_i\gg_i\cdot a_i$ belongs to the ideal $I$. Therefore, the sequence $c=\langle \gg_i\colon i\in\gw\rangle$ answers the demands of Definition~\ref{dccompletedefinition}, and the proof is complete.
\end{proof} 

\noindent Nonexamples of DC-completeness are easier to find than examples.

\begin{example}
If $I=\bigcup_nI_n$ where each collection $I_n$ is invariant, closed under subset, and not equal to $I$, then the ideal is not DC-complete no matter what the group action is. To see that, consider a sequence $\langle a_n\colon n\in\gw\rangle$ where for each $n\in\gw$, $a_n\in I\setminus I_n$ holds.
\end{example}

\noindent Such will be the case for the ideal of finite sets regardless of the action or the underlying infinite set.

\begin{example}
\label{dcclosedexample}
Let $X$ be a completely metrizable space without isolated points. Let $\Gamma$ be the group of all self-homeomorphisms of $X$ acting on $X$ by application, and let $I$ be the ideal of countable subsets of $X$ with countable closure. For $n\in\gw$ let $a_n\subset X$ be a nonempty countable closed set such that for each $n$, $a_n\subseteq a_{n+1}$ and no point of $a_n$ is isolated in $a_{n+1}$. The closure of $\bigcup_n a_n$ is a perfect subset of $X$, therefore uncountable. In addition, whenever $\gamma_n$ for $n\in\gw$ are self-homeomorphisms of the space $X$ such that $\forall n\ \forall x\in a_n\ \gamma_n\cdot x=\gamma_{n+1}\cdot x$, then it is still the case that no point of $\gamma_n\cdot a_n$ is isolated in $\gamma_{n+1}\cdot a_{n+1}$, so $\bigcup_n\gamma_n\cdot a_n$ still has uncountable closure and is not in the ideal. It follows that the dynamical ideal $\Gamma\acts X, I$ is not DC-complete. In the case of a Euclidean space $X$, DC fails in the permutation model $W[[X]]$ as it is impossible to find in that model a sequence of sets $\langle a_n\colon i\in I\rangle$ such that for each $n$, $a_n\subseteq a_{n+1}$ and no point of $a_n$ is isolated in $a_{n+1}$. Cf.\ Examples~\ref{countablenoncofinalexample} and ~\ref{countableclosedexample}.
\end{example}

\noindent My examples of DC-completeness are all implied by some stronger property.

\begin{example}
If $I$ is a $\gs$-ideal then the ideal is DC-complete no matter what the group action, as for every sequence $\langle a_n\colon n\in\gw\rangle$ of sets in $I$, the group elements $\gg_n=1$ will work as in Definition~\ref{dccompletedefinition}. In this case, it is not hard to show that the model $W[[X]]$ is closed under countable sequences in $V[[X]]$, so the DC in $W[[X]]$ follows from DC in $V[[X]]$.
\end{example}

\begin{theorem}
\label{cofdctheorem}
Let $\Gamma\acts X, I$ be a dynamical ideal. If the ideal has cofinal orbits then it is DC-complete.
\end{theorem}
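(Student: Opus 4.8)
The plan is to have Player II force the entire outcome $\bigcup_n\gamma_n\cdot a_n$ to lie inside a single fixed member of $I$; since $I$ is an ideal, that is exactly what is needed for II to win. After Player I opens with $a_0$ and II is forced to answer $\gamma_0=1$, I would invoke cofinal orbits to fix once and for all an $a_0$-large set $b\in I$ with $a_0\subseteq b$ (enlarging is harmless, as any superset of an $a_0$-large set is again $a_0$-large). Player II's strategy then maintains the invariant that the accumulated set $d_n=\bigcup_{m\le n}\gamma_m\cdot a_m$ stays contained in $b$. If the invariant is preserved through all rounds, the outcome lies in $b\in I$ and II wins, so the whole proof reduces to describing the absorption of a new move while keeping everything inside $b$.

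Maintaining the invariant is the absorption step. Suppose $d_n\subseteq b$ and Player I plays $a_{n+1}\in I$. The rule forces $\gamma_{n+1}\in\pstab(d_n)$, and to keep $d_{n+1}\subseteq b$ Player II wants $\gamma_{n+1}\cdot a_{n+1}\subseteq b$; equivalently, II needs an element of $\pstab(d_n)$ carrying $a_{n+1}$ into $b$, which is available precisely when $b$ is $d_n$-large. The one structural fact I would isolate here is the monotonicity of largeness: if $b$ is $e$-large and $d\subseteq e$, then $b$ is $d$-large, since $\pstab(e)\subseteq\pstab(d)$ and the group element witnessing $e$-largeness already lies in $\pstab(d)$. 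Thus largeness relative to a \emph{larger} set is the stronger and more useful property, and the entire difficulty is that $b$ is handed to us only as $a_0$-large while the position $d_n\supseteq a_0$ grows.

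The main obstacle is therefore keeping $b$ large relative to the growing position $d_n$, and it is genuinely the crux of the theorem. The naive fix is to iterate cofinal orbits into a tower $a_0\subseteq b^0\subseteq b^1\subseteq\cdots$ with each $b^{k+1}$ being $b^k$-large; then a later level is automatically large relative to any position contained in an earlier level, which supplies the required $\gamma_{n+1}$ and lets II absorb $a_{n+1}$ into $b^{k+1}$ whenever $d_n\subseteq b^k$. But this only yields $d_{n+1}\subseteq b^{k+1}$, so the positions drift up the tower, and $\bigcup_k b^k$ need not belong to $I$ (typically $I$ is not $\sigma$-closed, as already for the nowhere dense ideal), so drift alone does not bound the outcome. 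The resolution I would pursue is to choose the $a_0$-large target $b$ to be self-similar enough that a single level suffices: $b$ should remain $d$-large for every position $d\subseteq b$ that the strategy actually produces, and each new piece $\gamma_{n+1}\cdot a_{n+1}$ should be placed into a region of $b$ that preserves largeness relative to the enlarged position. This is exactly the role of the good sets of Example~\ref{qcofinalexample} and of the $n$-dimensional Sierpi\'nski carpet, and the hard part is verifying that cofinal orbits always yields such a hereditarily large target. As a consistency check, cofinal orbits already forces $W[[X]]$ to satisfy well-ordered choice by Theorem~\ref{cofinaltheorem}, hence the axiom of dependent choices; by the contrapositive of Theorem~\ref{dctheorem}(1) Player I then cannot have a winning strategy, and the present theorem upgrades this to an explicit winning strategy for Player II.
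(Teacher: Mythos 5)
Your setup and your diagnosis of the difficulty are exactly right: the whole theorem reduces to keeping the fixed target $b$ large relative to the growing position $d_n$. But at that crux your proposal stops being a proof. You propose to resolve it by choosing $b$ ``self-similar enough'' to be hereditarily large for every position the strategy produces, appeal to the good sets of Example~\ref{qcofinalexample} and the Sierpi\'nski carpet as models, and then concede that ``the hard part is verifying that cofinal orbits always yields such a hereditarily large target.'' That verification is the theorem; no argument is given, and in fact no such special choice of $b$ is available from the hypothesis in general. The abstract definition of cofinal orbits gives you one $a_0$-large set and nothing about its internal structure.

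The missing idea, which makes the hereditary-largeness requirement unnecessary, is an absorption lemma that \emph{regenerates} largeness dynamically rather than demanding it of $b$ in advance. First note that the largeness relation is conjugation-invariant: if $b$ is $a$-large and $\gd\in\Gamma$, then $\gd\cdot b$ is $\gd\cdot a$-large. Now suppose $b$ is $d_n$-large and Player I plays $a_{n+1}$ (you may assume $d_n\subseteq a_{n+1}$). Do not push $a_{n+1}$ alone into $b$: first enlarge it to a set $e\in I$ which is $a_{n+1}$-large with $a_{n+1}\subseteq e$ (this exists by cofinal orbits applied to $a_{n+1}$, and supersets of large sets are large), and then use $d_n$-largeness of $b$ to find $\gamma_{n+1}\in\pstab(d_n)$ with $\gamma_{n+1}\cdot e\subseteq b$. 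By invariance, $\gamma_{n+1}\cdot e$ is $\gamma_{n+1}\cdot a_{n+1}$-large, and since $b\supseteq\gamma_{n+1}\cdot e$, the set $b$ itself is $\gamma_{n+1}\cdot a_{n+1}$-large, i.e.\ $d_{n+1}$-large. So the invariant ``$d_n\subseteq b$ and $b$ is $d_n$-large'' is maintained by carrying a largeness witness along with each new move, and the outcome lies in $b\in I$. This is how the paper argues, and it needs nothing beyond the two soft facts (invariance and monotonicity under supersets) you already had. One further caution: your closing consistency check via Theorems~\ref{cofinaltheorem} and~\ref{dctheorem}(1) only shows Player I has no winning strategy; since the DC game is not known to be determined for an arbitrary dynamical ideal, that observation cannot substitute for exhibiting Player II's strategy.
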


\begin{proof}
The proof depends on two simple properties of cofinal orbits:

\begin{claim}
\label{oneclaim}
The set $\{\langle a, b\rangle\colon a, b\in I$ and $b$ is $a$-large$\}$ is $\Gamma$-invariant.
\end{claim}

\begin{proof}
Let $b$ be $a$-large and $\gd\in\Gamma$. To show that $\gd\cdot b$ is $\gd\cdot a$-large, let $c\in I$ be arbitrary. Since $b$ is $a$-large, there is $\gamma\in\pstab(a)$ such that $\gamma\cdot\gd^{-1}\cdot c\subset b$. Multiplying by $\gd$ on both sides, we get $\gd\gg\gd^{-1}\cdot c\subset\gd\cdot b$. Since $\gg\in\pstab(a)$, $\gd\gg\gd^{-1}\in\pstab(\gd\cdot a)$ holds and the claim follows.
\end{proof}

\begin{claim}
\label{twoclaim}
Let $b$ be $a$-large and $c\in I$. Then there is $\gamma\in\pstab(a)$ such that $\gamma\cdot c\subseteq b$ and $b$ is $\gamma\cdot c$-large.
\end{claim}

\begin{proof}
Let $d\in I$ be $c$-large; without loss $c\subset d$. Use the largeness assumption on $b$ to find a group element $\gamma\in \pstab(a)$ such that $\gamma\cdot d\subset b$. By Claim~\ref{oneclaim}, $\gamma\cdot d$ is $\gamma\cdot c$-large; since $b$ is a superset of $\gamma\cdot d$, it follows that $b$ is $\gamma\cdot c$-large as desired.
\end{proof}

\noindent To show the DC completeness, suppose that $\langle a_n\colon n\in\gw\rangle$ is a countable collection of elements of $I$ increasing with respect to inclusion. Let $b$ be $a$-large and aim to find group elements $\gamma_n$ such that $\forall x\in a_n\ \gamma_n(x)=\gamma_{n+1}(x)$, $\gamma_n\cdot a_n\subset b$ and $b$ is $\gamma_n\cdot a_n$-large. Let $\gamma_0=1$. If $\gamma_n$ has been found, consider the set $\gamma_n^{-1}\cdot b$, which by Claim~\ref{oneclaim} is $a_n$-large. Use Claim~\ref{twoclaim} to find $\gd\in\pstab(a_n)$ such that $\gd\cdot a_{n+1}\subseteq \gamma^{-1}_n$ and $\gamma^{-1}_n\cdot b$ is $\gd\cdot a_{n+1}$-large. The group element $\gamma_{n+1}=\gamma_n\gd$ works as required. In the end, $\bigcup_n\gamma_n\cdot a_n$ is a subset of $b$ and therefore belongs to the ideal $I$.
\end{proof}

\begin{example}
\cite{karagila:dc} Let $X$ be a countable dense linear ordering without endpoints, let $\Gamma$ be the group of all its automorphisms acting by application, and let $I$ be the ideal of nowhere dense sets. Then in the associated permutation model, DC holds--in fact, by Example~\ref{qcofinalexample}, the dynamical ideal has cofinal orbits, so the axiom of well-ordered choice holds in the permutation model.
\end{example}

\section{Stratifying well-ordered choice}
\label{countablesection}

There is an obvious way of stratifying the axiom of well-ordered choice into statements indexed by well-ordered infinite cardinals.

\begin{definition}
\cite[Form 8]{howard:ac}
The \emph{axiom of countable choice} is the statement that every countable family of nonempty sets has a choice function.  In general, if $\kappa$ is an infinite cardinal, then the \emph{axiom of $\kappa$-choice}, $\mathrm{AC}_\kappa$, is the statement that every family of cardinality $\kappa$ consisting of nonempty sets has a choice function.
\end{definition}

\noindent  This fragment of AC has a clean counterpart among properties of dynamical ideals:

\begin{definition}
Let $\Gamma\acts X, I$ be a dynamical ideal, and let $\kappa$ be an infinite cardinal. Say that the dynamical ideal is \emph{dynamically $\kappa$-complete} if for every set $a\in I$ and every collection $\langle b_\ga\colon \ga\in\kappa\rangle$ of sets in $I$ there are group elements $\gamma_\ga\in\pstab(a)$ for $\ga\in\kappa$ such that $\bigcup_{\ga\in\kappa}\gamma_\ga\cdot b_\ga\in I$.
\end{definition}

\begin{theorem}
Let $\Gamma\acts X, I$ be a dynamical ideal and $\kappa$ be an infinite cardinal.

\begin{enumerate}
\item If the dynamical ideal is dynamically $\kappa$-complete then the associated permutation model satisfies the axiom of $\kappa$-choice;
\item if the dynamical ideal is dynamically closed and the associated permutation model satisfies the axiom of $\kappa$-choice, then the ideal is dynamically $\kappa$-complete.
\end{enumerate}
\end{theorem}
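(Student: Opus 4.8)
The two implications mirror the corresponding parts of Theorem~\ref{cofinaltheorem}, with $\gs$-completeness playing the role of cofinal orbits and a countable index set replacing the well-orderable family.

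For (1), the plan is to start with a sequence $\langle A_n\colon n\in\gw\rangle\in W[[X]]$ of nonempty sets and fix $a\in I$ with $\pstab(a)\subseteq\stab(\langle A_n\colon n\in\gw\rangle)$; since the indices are pure, $\pstab(a)\subseteq\stab(A_n)$ for every $n$. Using the axiom of choice in $V[[X]]$, I would pick $D_n\in A_n$ together with $d_n\in I$ such that $\pstab(d_n)\subseteq\stab(D_n)$. Now $\gs$-completeness, applied to $a$ and the sequence $\langle d_n\colon n\in\gw\rangle$, produces group elements $\gamma_n\in\pstab(a)$ with $b=\bigcup_n\gamma_n\cdot d_n\in I$. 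Setting $C_n=\gamma_n\cdot D_n$, the inclusion $\gamma_n\in\pstab(a)\subseteq\stab(A_n)$ gives $C_n\in A_n$, while the support-invariance computation of Proposition~\ref{siproposition}(3) upgrades $\pstab(d_n)\subseteq\stab(D_n)$ to $\pstab(\gamma_n\cdot d_n)\subseteq\stab(C_n)$, and since $\gamma_n\cdot d_n\subseteq b$ we get $\pstab(b)\subseteq\stab(C_n)$. The function $n\mapsto C_n$ is then fixed by $\pstab(b)$, so it belongs to $W[[X]]$ and is the desired choice function.

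For (2), assume the ideal is definably closed and $W[[X]]$ satisfies countable choice. Given $a\in I$ and $\langle b_n\colon n\in\gw\rangle$ in $I$, the plan is to choose (in $V[[X]]$) a well-ordering $\leq_n$ of each $b_n$ and let $C_n$ be the $\pstab(a)$-orbit of the pair $\langle b_n,\leq_n\rangle$. Each $C_n$ is $\pstab(a)$-invariant and nonempty, so the sequence $\langle C_n\colon n\in\gw\rangle$ lies in $W[[X]]$; countable choice there yields $g\in W[[X]]$ with $g(n)=\langle\gamma_n\cdot b_n,\gamma_n\cdot\leq_n\rangle$ for some $\gamma_n\in\pstab(a)$. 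I would then let $b\in I$ be a definably closed set with $\pstab(b)\subseteq\stab(g)$, obtained by enlarging a support of $g$ using the definable-closure hypothesis.

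The crux is to convert the fact that $\pstab(b)$ fixes the pair $g(n)$ setwise into the containment $\gamma_n\cdot b_n\subseteq b$. Here the second coordinate $\gamma_n\cdot\leq_n$ is decisive: any $\gd\in\pstab(b)$ both fixes $\gamma_n\cdot b_n$ setwise and preserves the well-ordering $\gamma_n\cdot\leq_n$, and since a well-ordering admits no nontrivial order-automorphism, $\gd$ must fix every element of $\gamma_n\cdot b_n$. Thus $\pstab(b)\subseteq\stab(x)$ for each $x\in\gamma_n\cdot b_n$, and the definable closedness of $b$ forces $x\in b$. Consequently $\gamma_n\cdot b_n\subseteq b$ for every $n$, so $\bigcup_n\gamma_n\cdot b_n\subseteq b\in I$, witnessing $\gs$-completeness. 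The main obstacle, exactly as in Theorem~\ref{cofinaltheorem}(2), is this last conversion: without attaching the rigidifying well-orderings, a setwise-fixed orbit element need not be pointwise fixed, and the definable-closure assumption could not be invoked.
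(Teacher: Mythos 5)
Your proof is correct and follows essentially the same route as the paper's: part (1) is identical, and part (2) rests on the same key device of attaching well-orderings to the sets $b_n$ so that setwise stabilization becomes pointwise stabilization, with definable closedness of the support then forcing $\gamma_n\cdot b_n\subseteq b$. The only cosmetic difference is that the paper argues (2) contrapositively, assuming $\gs$-completeness fails and exhibiting a countable family (all well-orderings of sets in the $\pstab(a)$-orbit of $b_n$) with no selector, whereas you apply countable choice directly to the orbits of the pairs $\langle b_n,\leq_n\rangle$ and read off the witnesses $\gamma_n$.
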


\begin{proof}
For (1), assume that the ideal is dynamically $\kappa$-complete and $A=\langle A_\ga\colon \ga\in\kappa\rangle$ is a sequence of nonempty sets in the permutation model. Let $a\in I$ be such that $\pstab(a)\subset\stab(A)$. Outside of the permutation model, for each $\ga\in\kappa$ pick a set $B_\ga\in A_\ga$ and a set $b_\ga\in I$ such that $\pstab(b_\ga)\subseteq\stab(B_ga)$. Use the completeness assumption on the dynamical ideal to find elements $\gamma_\ga\in\pstab(a)$  for $\ga\in\kappa$ such that  $c=\bigcup_\ga\gamma_\ga\cdot b_\ga\in I$. Now, consider the sequence $C=\langle \gamma_\ga\cdot B_\ga\colon \ga\in\kappa\rangle$.
Since each $\gamma_\ga$ belongs to $\pstab(a)$, it is still the case that $\gamma_\ga\cdot B_\ga\in A_\ga$. At the same time, for each $\ga\in\kappa$ $\pstab(\gamma\cdot b_\ga)\subset\stab(\gamma_\ga\cdot B)$ holds by Proposition~\ref{siproposition}, so $\pstab(c)\subset\stab(C)$, $C$ belongs to the permutation model, and it witnesses the axiom of $\kappa$-choice instance for the collection $A$.

For (2), suppose that the ideal is dynamically closed and the permutation model satisfies $\kappa$-choice. To argue for the dynamical $\kappa$-completeness, suppose that that $a, b_\ga\colon \ga\in\kappa$ are sets in the ideal. Let $A_\ga$ be the set of all well-orderings on sets of the form $\gamma\cdot b_\ga$ where $\gamma$ ranges over all elements of $\pstab(a)$, and let $A=\langle A_\ga\colon\ga\in\kappa\rangle$.  Note that for every set of the form $\gamma\cdot b_\ga$ belongs to $I$, clearly $\pstab(\gamma\cdot b_\ga)$ fixes it pointwise, and therefore all relations on $\gamma\cdot b_\ga$ in $V[[X]]$, including well-orderings, belong to $W[[X]]$. Since $\pstab(a)\subset\stab(A_\ga)$ holds for every $\ga\in\kappa$, it must be the case that $A\in W[[X]]$. Use $\kappa$-choice to find a choice function $\langle \gamma_\ga\cdot b_\ga, \leq _\ga\colon \ga\in\kappa\rangle$ for $A$ in the premutation model. It is easy to see that the set $c=\bigcup_\ga\gamma_\ga\cdot b_\ga$ is well-ordered by an amalgamation of the well-orders $\leq_\ga$ in the permutation model. By the dynamical closure assumption on $I$ and Proposition~\ref{closureproposition}(2), it follows that the group elements $\gamma_\ga$ for $\ga\in\kappa$ witness the dynamical $\kappa$-completeness.
\end{proof}

\noindent The stratification effect stops at a clearly predetermined cardinal.

\begin{theorem}
Suppose that $\Gamma\acts X, I$ is a dynamical ideal. If $I$ is generated by $\kappa$ many sets, then in the permutation model $W[[X]]$,
AC$(\kappa)$ is equivalent to the well-ordered axiom of choice.
\end{theorem}

\begin{proof}
The right-to-left implication is clear. For the right-to-left implication, by Proposition~\ref{closureproposition}(1) without loss assume that $I$ is dynamically closed. Suppose $a\in I$ is a set. In $V[[X]]$, let $b_\ga\colon\ga\in\kappa$ be a collection generating the ideal $I$. For each $\ga\in\kappa$, let $A_\ga$ be the set of all pairs $\langle \gg\cdot b_\ga, \leq\rangle$ where $\gg\in\pstab(a)$ and $\leq$ is a well-ordering on $\gg\cdot b_\ga$. It is clear that the sequence $A=\langle A_\ga\colon\ga\in\kappa\rangle$ belongs to the permutation model, as $\pstab(a)\subseteq\stab(A)$. By AC$(\kappa)$ in the permutation model, there are elements $\gg_\ga\in\pstab(a)$ and well-orderings $\leq_\ga$ for $\ga\in\kappa$ such that the sequence $\langle \gg_\ga\cdot b_\ga, \leq_\ga\colon\ga\in\kappa\rangle$ is a selector for $A$ and belongs to the permutation model. The set $c=\bigcup_\ga\gg_\ga\cdot b_\ga$ is well-orderable in the premutation model by an amalgamation of the well-orders $\leq_\ga$. By the dynamical closure assumption and Proposition~\ref{closureproposition}(2), $c\in I$ holds. It is not hard to see that the set $c$ witnesses the instance of cofinal orbits for the set $a\in I$. By Theorem~\ref{cofinaltheorem}, the axiom of well-ordered choice holds in the permutation model.
\end{proof}

\noindent Thus, in the popular permutation models obtained from automorphisms of countable structures and the ideal of finite sets (Example~\ref{automorphismexample}), countable choice is in fact equivalent to well-ordered choice.

\begin{example}
\label{countableclosedexample}
Let $X$ be the closed unit interval $[0, 1]$, let $\Gamma$ be the group of orientation preserving self-homeomorphisms of $X$ acting by application, and let $I$ be the ideal of closed countable subsets of $X$. Then the ideal $\Gamma\acts X, I$ is dynamically $\aleph_0$-complete, but not dynamically $\aleph_1$-complete.
\end{example}

\noindent Compare this with Example~\ref{countablenoncofinalexample} and~\ref{dcclosedexample}: the associated permutation model satisfies the axiom of countable choice, but not the axiom of well-ordered choice or dependent choice. This example was improved by Justin Young \cite{young:personal} who showed that its conclusion holds for $X$ an arbitrary Euclidean space.

\begin{proof}
Let $a, b_n\colon n\in\gw$ be countable closed subsets of $X$. Let $J$ be the set of all inclusion-maximal open intervals disjoint from $a$. For each interval $j\in J$ and every $n\in\gw$, find an open interval $j(n)\subset j$ which is disjoint from $b_n$ and its endpoints are still in $j$, and find an oreintation-preserving self-homemorphism $h_{jn}\colon j\to j$ such that $j\setminus h_{jn}(j(n))$ consists of one interval at each end of $j$, both of length less than $2^{-n}$. For each number $n\in\gw$, let $\gamma_n=\id\restriction a\cup\bigcup_{j\in J}h_{jn}$; I will show that these group elements work as required.

It will be enough to show that $c=a\cup\bigcup_n\gamma_n\cdot b_n$ is a closed set, because it is clearly countable. Suppose then that $\langle x_k\colon k\in\gw\rangle$ is a converging sequence of points in $c$ with limit $x$, and work to show that $x\in c$. There are two cases.

\noindent\textbf{Case 1.} No interval $j\in J$ contains a tail of the sequence. In this case, the limit $x$ must be in the set $a$, completing the proof. 

\noindent\textbf{Case 2.} Case 1 fails. Let $j\in J$ be the unique interval containing a tail of the sequence. If the sequence has nonempty intersection with only fiinitely many sets $\gamma_n\cdot b_n$, then the limit must belong to one of those finitely many sets, completig the proof. If, on the other hand, the sequence has nonempty intersection with infinitely many sets  $\gamma_n\cdot b_n$, then by the choice of the homeomorphisms $h_{jn}$ it must be the case that the limit is equal to one of the endpoints of $j$. Both of these endpoints are in the set $a$. The example has been demonstrated.
\end{proof}

\begin{example}
\label{wocompleteexample}
\cite[Model N23]{howard:ac} Let $\langle X, \leq\rangle$ be a countable dense linear order without endpoints, let $\Gamma$ be its automorphism group, and let $I$ be the ideal of those subsets of $X$ which are well-ordered by $\leq$. The dynamical ideal $\Gamma\acts X, I$ is dynamically $\aleph_0$-complete.
\end{example}

\begin{proof}
Let $a\in I$ and $b_n$ for $n\in\gw$ be sets in the ideal; I must produce automorphisms $\gamma_n\in\pstab(a)$ for all $n\in\gw$ such that $a\cup \bigcup_n\gamma_n\cdot b_n\in I$ holds. Let $U$ be the set of all open intervals of $X$ which are disjoint from $a$ and maximal such. Inside each interval $i\in U$ pick points $x_{in}$ for $n\in\gw$ such that they form an increasing sequence cofinal in $i$, and automorphisms $\gamma_{in}$ equal to identity on $X\setminus i$ such that $\gamma_{in}(\min(b_n\cap i))\geq x_{in}$ whenever $b_n\cap i\neq 0$. Let $\gamma_n$ be the composition of all $\gamma_{in}$ for $i\in U$; this makes sense since the automorphisms $\gamma_{in}$ for $i\in U$ have pairwise disjoint supports. I claim that the set $c=a\cup \bigcup_n\gamma_n\cdot b_n$ is well-ordered as desired.

To see this, suppose towards a contradiction that $\langle x_m\colon m\in\gw\rangle$ is a strictly decreasing sequence in $c$. By the well-ordering assumption on $a$, the points $y_m=\min\{z\in a\colon z\geq x_m\}$ have to stabilize at some point; the tail of the sequence then belongs to the same open interval $i\in U$ which borders on its right end at the eventually stable value of the points $y_m$. By the cofinal choice of the points $x_{in}$ in $i$, there is an $n\in\gw$ such that a tail of the sequence consists of elements of $i$ which are smaller than $x_{in}$. However, this tail is then a subset of the set $\bigcup_{k\in n}\gamma_k\cdot b_k$. Since this is a finite union of well-ordered sets, it is itself well-ordered, and it cannot contain an infinite strictly decreasing sequence. A contradiction.
\end{proof}

\begin{example}
\cite[Model N10]{howard:ac} Let $\langle X, \leq\rangle$ be a countable dense linear order without endpoints, let $\Gamma$ be its automorphism group, and let $I$ be the ideal of those subsets of $X$ which are well-ordered by $\leq$, bounded in $X$, and bounded below every element of $X$. The dynamical ideal $\Gamma\acts X, I$ is dynamically $\aleph_0$-complete. The status of countable choice has apparently not been known in the associated permutation model.
\end{example}

\begin{proof}
Let $a\in I$ and $b_n$ for $n\in\gw$ be sets in the ideal; I must produce automorphisms $\gamma_n\in\pstab(a)$ for all $n\in\gw$ such that $a\cup \bigcup_n\gamma_n\cdot b_n\in I$ holds. Let $U$ be the set of all open intervals of $X$ which are disjoint from $a$ and maximal such. Note that the right endpoint of each interval $i\in u$ belongs to $a$, except for the rightmost, unbounded interval in $U$; in any case, the sets $b_n$ are all bounded in $i$. Inside each interval $i\in U$ pick points $x_{in}$ for $n\in\gw$ such that they form an increasing sequence which is bounded and does not have a supremum in $X$, pick points $y_{in}\in i$ such that $b_n\cap i$ is bounded by $y_{in}$, and pick automorphisms $\gamma_{in}$ equal to identity on $X\setminus i$ such that $\gamma_{in}(\min(b_n\cap i))\geq x_{in}$ whenever $b_n\cap i\neq 0$ and $\gamma_{in}(y_{in})=x_{in+1}$. Let $\gamma_n$ be the composition of all $\gamma_{in}$ for $i\in U$; this makes sense since the automorphisms $\gamma_{in}$ for $i\in U$ have pairwise disjoint supports. I claim that the set $c=a\cup \bigcup_n\gamma_n\cdot b_n$ is well-ordered and bounded below each point  as desired.

For the well-ordering, suppose towards a contradiction that $\langle x_m\colon m\in\gw\rangle$ is a strictly decreasing sequence in $c$. By the well-ordering assumption on $a$, the points $y_m=\min\{z\in a\colon z\geq x_m\}$ have to stabilize at some point; the tail of the sequence then belongs to the same open interval $i\in U$ which borders on its right end at the eventually stable value of the points $y_m$. By the choice of the points $x_{in}$ in $i$, there is an $n\in\gw$ such that a tail of the sequence consists of elements of $i$ which are smaller than $x_{in}$. However, this tail is then a subset of the set $\bigcup_{k\in n}\gamma_k\cdot b_k$. Since this is a finite union of well-ordered sets, it is itself well-ordered, and it cannot contain an infinite strictly decreasing sequence. A contradiction.

For the boundedness, let $z\in X$ be an arbitrary point. Since $a$ is bounded below $z$, $z$ is an internal point or the right-hand endpoint of some interval $i\in U$. If $z$ is greater than all points $x_{in}$ for $n\in\gw$, then as these points do not have a supremum in $X$, the sequence $\langle x_{in}\colon n\in\gw\rangle$ is bounded below $z$, and so is the set $c$. If there is a number $n$ such that $z\leq x_{in}$, then $c\cap i$ below $z$ is the union of the sets $\gamma_k\cdot b_k$ for $k\in n$, each of which is bounded below $z$, so $c$ is bounded below $z$ again. The proof is complete.
\end{proof}

\begin{example}
\cite[Model N21]{howard:ac}
Let $\kappa$ be an infinite regular cardinal. Let $X$ be the structure $(\kappa^+)^{<\gw}$ ordered by reverse extension. Let $\Gamma$ be the group of all automorphisms of $X$ acting by application, and let $I$ be the ideal generated by well-founded subtrees of $X$ of cardinality at most $\kappa$. Then $\Gamma\acts X, I$ is a dynamically $\kappa$-complete ideal. In the permutation model, $X$ witnesses the failure of DC, while $\kappa$-choice holds.
\end{example}

\begin{proof}
Let $a, b_\ga$ for $\ga\in\kappa$ be sets in the ideal $I$; increasing them if necessary, I may assume they are all well-founded trees. For every node $x\in a$ and every $\ga\in\kappa$, write $c(\ga, x)=\{y\in b_n\setminus a\colon y$ is an immediate successor of $x\}$. For each $\ga\in\kappa$ choose $\gamma_\ga\in\pstab(a)$ to be an automorphism of $X$ such that for every node $x\in a$, the sets $\gamma_\ga\cdot c(\ga, x)$ for $n\in\gw$ are pairwise disjoint. This is possible since the sets $c(\ga, x)$ have cardinality at most $\kappa$ while the set of immediate successors of $x$ has cardinality $\kappa^+$. I claim that the tree $a\cup\bigcup_\ga\gamma_\ga\cdot b_\ga\subset X$ is well-founded. Indeed, any putative infinite branch $v$ through it either must be a subset of $a$ (an impossibility by the well-foundedness assumption on $a$) or the set $v\cap a$ is finite and contains a longest element, call it $x$. If $y$ is the immediate successor of $x$ in $v$, then there is a unique number $\ga\in\kappa$ such that $y\in\gamma_\ga\cdot b_\ga$ by the choice of the automorphisms $\gamma_\ga$. This means that the branch $v$ must be a subset of $\gamma_\ga\cdot b_\ga$, contradicting the assumed well-foundedness of $b_\ga$.
\end{proof}

\section{Simplicity}
\label{simplesection}

\noindent One fragment of axiom of choice for which an attractive dynamical criterion can be found is well-orderable closure.

\begin{definition}
\cite[Form 231]{howard:ac}
\emph{Well-orderable closure} is the statement that for every well-orderable set $A$ consisting of well-orderable sets, $\bigcup A$ is well-orderable.
\end{definition}

\noindent This is often verified in permutation models as a consequence of  well-ordered axiom of choice (Section~\ref{wosection}). In situations where this is not available, the following elegant criterion on dynamical ideals is helpful.

\begin{definition}
A dynamical ideal $\Gamma\acts X, I$ is 

\begin{enumerate}
\item \emph{simple} if for every $a\subseteq b\in I$, the only normal subgroup of $\pstab(a)$ containing $\pstab(b)$ is $\pstab(a)$ itself;
\item \emph{almost simple} if for every set in $I$ has a superset $a\in I$ such that for every $a\subseteq b\in I$, the only normal subgroup of $\pstab(a)$ containing $\pstab(b)$ is $\pstab(a)$ itself.
\end{enumerate}
\end{definition}

\noindent Clearly, every simple dynamical ideal is almost simple. While the weaker version is sufficient to imply well-orderable closure, the full simplicity has the advantage of being natural and closed under the subideal operation. One natural example of an almost simple ideal which is not simple is introduced in Example~\ref{weglorzexample}.

\begin{theorem}
\label{simpletheorem}
Suppose that $\Gamma\acts X, I$ is an almost simple dynamical ideal. Then in the associated permutation model, a union of well-orderable set of well-orderable sets is well-orderable.
\end{theorem}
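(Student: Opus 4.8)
The plan is to reduce the statement to the well-orderability criterion of Proposition~\ref{wotheorem}: to show that $\bigcup A$ is well-orderable in $W[[X]]$ it suffices to exhibit a single set $a\in I$ whose pointwise stabilizer $\pstab(a)$ fixes every element of $\bigcup A$, i.e.\ $\pstab(a)\subseteq\pstab(\bigcup A)$. Thus the whole argument amounts to locating such an $a$ and verifying that $\pstab(a)$ acts trivially on each member of $A$.

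First I would unwind the hypotheses. Let $A\in W[[X]]$ be well-orderable with every element well-orderable. By Proposition~\ref{wotheorem} fix $a_0\in I$ with $\pstab(a_0)\subseteq\pstab(A)$. Now invoke almost simplicity to enlarge $a_0$ to a set $a\in I$ witnessing the simplicity condition (every set in $I$ has such a superset). Since $\pstab$ is antimonotone, $\pstab(a)\subseteq\pstab(a_0)\subseteq\pstab(A)$ persists, so $\pstab(a)$ fixes every member $B\in A$ setwise, that is $\pstab(a)\subseteq\stab(B)$.

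The crux, applied to each $B\in A$, is an automatic normality observation. Because $\pstab(a)\subseteq\stab(B)$, the group $\pstab(a)$ acts on the set $B$ by restriction, yielding a homomorphism $\pstab(a)\to\mathrm{Sym}(B)$ whose kernel is precisely $\pstab(a)\cap\pstab(B)$, the elements of $\pstab(a)$ fixing $B$ pointwise. As a kernel, this subgroup is normal in $\pstab(a)$ \emph{for free} --- this is the single place where the structure really does the work, since the simplicity condition speaks only about normal subgroups. Since $B$ is well-orderable, Proposition~\ref{wotheorem} gives $b\in I$ with $\pstab(b)\subseteq\pstab(B)$; replacing $b$ by $b\cup a$ I may assume $a\subseteq b$, whence $\pstab(b)\subseteq\pstab(a)\cap\pstab(B)$. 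The simplicity condition on $a$ now forces the only normal subgroup of $\pstab(a)$ containing $\pstab(b)$ to be $\pstab(a)$ itself, so $\pstab(a)\cap\pstab(B)=\pstab(a)$, i.e.\ $\pstab(a)\subseteq\pstab(B)$.

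Finally, every element of $\bigcup A$ lies in some $B\in A$, and $\pstab(a)\subseteq\pstab(B)$ for each such $B$, so $\pstab(a)\subseteq\pstab(\bigcup A)$; by Proposition~\ref{wotheorem} again, $\bigcup A$ is well-orderable in $W[[X]]$. I expect no serious obstacle beyond bookkeeping: because I only have \emph{almost} simplicity, I must enlarge $a_0$ to the good set $a$ once at the outset and then arrange each per-$B$ witness $b$ to contain that single $a$. The normality needed to trigger the hypothesis is never the hard part --- it is supplied automatically by the kernel observation.
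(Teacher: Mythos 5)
Your proposal is correct, and its skeleton matches the paper's: reduce to Proposition~\ref{wotheorem}, fix the almost-simplicity witness $a$ above a support of $A$, and for each $B\in A$ choose $b\in I$ with $a\subseteq b$ and $\pstab(b)\subseteq\pstab(B)$ so that the simplicity hypothesis can be invoked. Where you genuinely differ is the mechanism for invoking it. The paper works with the normal subgroup of $\pstab(a)$ \emph{generated} by $\pstab(b)$: by hypothesis it is all of $\pstab(a)$, so an arbitrary $\gd\in\pstab(a)$ factors as a finite product $\prod_{m\in n}\gamma_m^{-1}\gd_m\gamma_m$ with $\gamma_m\in\pstab(a)$ and $\gd_m\in\pstab(b)$, and the paper then checks by hand that each conjugate fixes each $C\in B$ (using that $\gamma_m\cdot C\in B$ and that $\gd_m$ fixes $B$ pointwise). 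You instead exhibit a concrete normal subgroup lying above $\pstab(b)$, namely $\pstab(a)\cap\pstab(B)$, normal for free as the kernel of the restriction homomorphism $\pstab(a)\to\mathrm{Sym}(B)$, and let the hypothesis collapse it to all of $\pstab(a)$. The two arguments are logically dual, and the underlying computation is identical: your normality claim is verified by exactly the conjugation calculation the paper performs on the factors $\gamma_m^{-1}\gd_m\gamma_m$. What your version buys is the elimination of the product decomposition and an explanation of \emph{why} normality is the right hypothesis here (normal subgroups arise as kernels of the induced actions on members of $A$); what the paper's version buys is a self-contained element-by-element verification that never mentions quotient or kernel structure. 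Both are complete proofs.
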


\begin{proof}
Suppose that $\Gamma\acts X, I$ is simple. Let $A$ be a well-orderable set consisting of well-orderable sets in the permutation model. By Proposition~\ref{wotheorem}, there is a set $a\in I$ such that $\pstab(a)\subset\pstab(A)$. Enlarging $a$ if necessary, I may assume that for every set $b\in I$ which is a superset of $a$, the only normal subgroup of $\pstab(a)$ containing $\pstab(b)$ is $\pstab(a)$ itself.  It will be enough to show that $\pstab(a)\subset\pstab(\bigcup A)$. In other words, given a set $B\in A$ and $C\in B$, I must prove that $\pstab(a)\subset\stab(C)$. Let $\gd\in\pstab(a)$ be any element and work to show that $\gd\cdot C=C$.

To prove this, use Proposition~\ref{wotheorem} again to find a set $b\in I$ such that $a\subseteq b$ and $\pstab(b)\subset\pstab(B)$. By the choice of the set $a$, there is a number $n\in\gw$ and elements $\gamma_m\in\pstab(a)$ and $\gd_m\in\pstab(b)$ for $m\in n$ such that $\gd=\prod_{m\in n}\gamma_m^{-1}\gd_m\gamma_m$. It will be enough to show that for every $m\in n$, $\gamma_m^{-1}\gd_m\gamma_m\cdot C=C$. Towards this, observe that $\gd_m$ fixes all elements of $B$' in particular, it fixes $\gamma_m\cdot C$. The proof is complete.
\end{proof}

\noindent Simplicity is an attractive concept with an attractive conclusion. However, as in the case of simplicity of groups, it is not always easy to check. The first class of examples comes from model theory.

\begin{definition}
Let $\mathcal{F}$ be a Fraiss{\' e} class with disjoint amalgamation. Say that $\mathcal{F}$ has \emph{hereditary canonical amalgamation} if there is a function $C$ which to each ordered pair of $\mathcal{F}$-structures $\langle A, B\rangle$ in amalgamation position assigns a minimal disjoint amalgamation so that

\begin{enumerate}
\item $C$ is invariant under isomorphism in both variables: if $\phi\colon A\to A'$ and $\psi\colon B\to B'$ are isomorphisms which agree on $\dom(A)\cap \dom(B)$, then there is an isomorphism of $C(A, B)$ to $C(A', B')$ extending $\phi\cup\psi$;
\item $C$ is hereditary for substructures in the left variable: if $A'$ is an induced substructure of $A$ such that $\dom(A)\cap\dom(B)\subset\dom(A')$, then there is an isomorphism between $C(A', B)$ and the algebraic closure of $\dom(A')\cup\dom(B)$ in $C(A, B)$ which is the identity on $\dom(A')\cup\dom(B)$.
\end{enumerate}
\end{definition}

\noindent Compared to the canonical amalgamation of Paolini and Shelah \cite{paolini:index}, the hereditary clause is added.

\begin{theorem}
\label{fraissetheorem}
Let $\mathcal{F}$ be a Fraiss{\' e} class with hereditary canonical disjoint amalgamation, let $X$ be its limit structure, let $\Gamma$ be the group of all automorphisms of $X$ acting by application, and let $I$ be the ideal of finite sets on $X$. Then $\Gamma\acts X, I$ is a simple dynamical ideal.
\end{theorem}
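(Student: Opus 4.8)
The plan is to fix finite sets $a\subseteq b$ and a normal subgroup $N\trianglelefteq\pstab(a)$ with $\pstab(b)\subseteq N$, and to prove that every $f\in\pstab(a)$ lies in $N$. The first observation is that $N$ is already quite large: since $N$ is normal in $\pstab(a)$ and contains $\pstab(b)$, for every $\beta\in\pstab(a)$ it contains $\beta\pstab(b)\beta^{-1}=\pstab(\beta\cdot b)$. By ultrahomogeneity of the limit $X$, any two copies of $b$ over $a$ (finite sets $b'$ with $\mathrm{str}(a\cup b')\cong\mathrm{str}(a\cup b)$ via a map fixing $a$) are of the form $\beta\cdot b$ for some $\beta\in\pstab(a)$, so $N\supseteq\pstab(b')$ for every such copy $b'$. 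Hence it suffices to factor an arbitrary $f\in\pstab(a)$ as $f=\sigma\tau$, where $\sigma$ fixes pointwise some copy of $b$ over $a$ and $\tau$ fixes pointwise some other copy of $b$ over $a$.

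To build such a factorization I would use canonical amalgamation to define a notion of two finite sets being \emph{freely placed over $a$} inside $X$: sets $b'$ and $c$ are free over $a$ when the induced substructure of $X$ on $a\cup b'\cup c$ is isomorphic, by the identity on $a\cup b'$ and on $a\cup c$, to the canonical amalgam $C(\mathrm{str}(a\cup b'),\mathrm{str}(a\cup c))$. Write $D=b\cup f^{-1}(b)$, a finite configuration over $a$ (recall $f\in\pstab(a)$, so $f^{-1}(b)$ is again a copy of $b$ over $a$). Using disjoint amalgamation together with the universality and extension property of the limit, I would realize a copy $b_2$ of $b$ over $a$ inside $X$ which is freely placed over $a$ with respect to the whole of $D$ and meets $D$ exactly in $a$. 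The hereditary clause (2) of canonical amalgamation then gives monotonicity of this relation: from freeness over $D$ one reads off that $b_2$ is freely placed over $a$ with $b$ and, separately, with $f^{-1}(b)$; concretely, the substructures of $X$ on $a\cup b_2\cup b$ and on $a\cup b_2\cup f^{-1}(b)$ are the corresponding canonical amalgams $C(\mathrm{str}(a\cup b),\mathrm{str}(a\cup b_2))$ and $C(\mathrm{str}(a\cup f^{-1}(b)),\mathrm{str}(a\cup b_2))$.

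Now the isomorphism-invariance clause (1) does the real work. The automorphism $f$ restricts to an isomorphism $\mathrm{str}(a\cup f^{-1}(b))\to\mathrm{str}(a\cup b)$ which is the identity on $a$, so clause (1), applied with the identity on the $\mathrm{str}(a\cup b_2)$ coordinate and $f$ on the other coordinate, shows that the two canonical amalgams above are isomorphic by a map extending $f\restriction f^{-1}(b)$ and the identity on $a\cup b_2$. Translating back into $X$, the finite partial map $p=\id_{a\cup b_2}\cup(f\restriction f^{-1}(b))$ is a partial isomorphism of $X$; it is well defined and injective precisely because $b_2$ meets $b$ and $f^{-1}(b)$ only in $a$, where $f$ is the identity. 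By ultrahomogeneity, $p$ extends to an automorphism $\tau\in\pstab(a)$. By construction $\tau$ agrees with $f$ on $f^{-1}(b)$, so $\sigma:=f\tau^{-1}$ fixes $b$ pointwise, while $\tau$ fixes $b_2$ pointwise. Thus $f=\sigma\tau$ with $\sigma\in\pstab(b)\subseteq N$ and $\tau\in\pstab(b_2)\subseteq N$, whence $f\in N$; as $f$ was arbitrary, $N=\pstab(a)$.

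The main obstacle is entirely in the second paragraph: extracting a well-behaved independence relation from the bare definition of hereditary canonical amalgamation. One must check that clause (2) really yields the monotonicity used above, being careful that the \emph{algebraic closure} appearing in clause (2) does not add points outside $a\cup b_2\cup b$ (this is where a relational or strong-amalgamation hypothesis would be reassuring), and that the free copy $b_2$ can genuinely be realized inside $X$, disjointly from $D$ over $a$, via the extension property of the limit. Once the relation is in place, clauses (1) and (2) together with ultrahomogeneity combine cleanly, and the surprisingly economical two-factor decomposition $f=\sigma\tau$ finishes the proof without any appeal to a classification of the normal subgroups of $\pstab(a)$.
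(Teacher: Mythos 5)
Your proof is correct, and while it runs on the same combinatorial engine as the paper's, it globalizes that engine in a genuinely different way. Both arguments realize a canonically placed copy inside $X$ by the extension property of the limit (this is the paper's ``saturation'' step, asserted there with exactly the brevity you were worried about), then apply the heredity clause twice in the left variable and the invariance clause once to produce a finite partial isomorphism, extend it to an automorphism by ultrahomogeneity, and push the result into the normal subgroup via conjugation by elements of $\pstab(a)$. Your concern about algebraic closure is handled in the paper simply by working with algebraically closed finite sets throughout, and you should do the same (replace $b$, $f^{-1}(b)$, $b\cup f^{-1}(b)$ and $a\cup b_2$ by their closures); the extra points that $\mathrm{acl}$ adds are harmless, since every identification in play is the identity on the named points and the surplus just rides along. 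The real difference is the outer layer. The paper regards $\Gamma$ as a Polish group, observes that the normal subgroup $\Delta$ generated by $\pstab(b)$ inside $\pstab(a)$ is open (it contains the open subgroup $\pstab(b)$) and hence closed, and so reduces everything to density: every finite partial isomorphism $\pi\colon c\to d$ fixing $a$ pointwise extends to a single conjugate $\gd\gamma\gd^{-1}$ of some $\gamma\in\pstab(b)$, manufactured from one free copy of the big set $e=\mathrm{acl}(b\cup c\cup d)$ over $a$. You avoid topology altogether and factor an arbitrary $f\in\pstab(a)$ outright as $\sigma\tau$ with $\sigma\in\pstab(b)$ and $\tau\in\pstab(\beta\cdot b)=\beta\pstab(b)\beta^{-1}$ for a suitable $\beta\in\pstab(a)$, using one free copy of the small set $b$ placed against $\mathrm{acl}(b\cup f^{-1}(b))$. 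Your route buys a strictly stronger, quantitative conclusion: every element of $\pstab(a)$ is a product of two $\pstab(a)$-conjugates of elements of $\pstab(b)$, whereas the open--closed--dense argument certifies membership in $\Delta$ with no bound on the number of factors. The paper's route buys lighter bookkeeping: it only ever manipulates finite partial maps, never a full automorphism, so no disjointness between $b$, $f^{-1}(b)$ and the fresh copy needs to be arranged. Both proofs lean equally on the routine but unproved existence of the free copy inside $X$, so that step is not a gap specific to your argument.
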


\begin{proof}
As a matter of terminology, say that a finite set $a\subset X$ is \emph{algebraically closed} if it is closed under all functions of $X$; $\mathrm{acl}(a)$ is the smallest algebraically closed subset of $X$ containing $a$. Let $C$ be the canonical amalgamation function on $\mathcal{F}$. Consider $\Gamma$ as a Polish group with the usual automorphism group topology. Let $a\subseteq b$ be finite subsets of $X$; I must prove that within $\pstab(a)$, the normal subgroup $\Delta$ generated by $\pstab(b)$ is equal to $\pstab(a)$ itself. I may assume that the sets $a, b$ are both algebraically closed. Since $\Delta$ is a group generated by an open subset of $\pstab(a)$, it is open, therefore closed. Thus, it is enough to show that $\Delta$ is dense in $\pstab(a)$.

To this end, let $\pi\colon c\to d$ be any morphism between two finite subsets of $X$, both including $a$ as a subset, both algebraically closed, and such that $\pi\restriction a=\id$. I must find an element of $\Delta$ extending $\pi$. To do this, let $e=\mathrm{acl}(b\cup c\cup d)$. By the saturation properties of $X$, there must be a finite set $f\subset X$ such that 

\begin{itemize}
\item $f\cap e=a$;
\item there is an isomorphism $\theta\colon e\to f$ which is identity on $a$;
\item $X\restriction \mathrm{acl}(e\cup f)$ is isomorphic to $C(X\restriction e, X\restriction f)$.
\end{itemize}

\noindent Let $\gd\in\pstab(a)$ be an automorphism extending $\theta$. By the heredity and invariance properties of $C$, there is a morphism between the algebraic closures of $c\cup f$ and $d\cup f$ extending the function $\pi\cup\id_f$. Let $\gamma\in\pstab(f)$ be any automorphism of $X$ extending it. Now, $\gamma\in\pstab(f)$ implies that $\gd^{-1}\gamma\gd\in\pstab(e)\subseteq\pstab(b)$. It follows that $\gamma=\gd(\gd^{-1}\gamma\gd)\gd^{-1}$ is an element of $\Delta$ extending $\pi$ as desired.
\end{proof}

\noindent Nearly all permutation models associated with limits of Fraiss{\' e} structures allow of a much more detailed analysis than just simplicity; this will appear in forthcoming work. For now, I will list only two very special examples.

\begin{example}
Let $\mathcal{F}$ be the class of structures with no relations and no functions; hereditary canonical amalgamation obviously holds.  Let $X$ be its limit, the countable set with no structure. The associated model is \cite[Model N1]{howard:ac}. In this model, $[X]^{<\aleph_0}$ is a set of well-orderable sets without a selector, showing that the conclusion of Theorem~\ref{simpletheorem} cannot be strengthened.
\end{example}

\begin{example}
The class of rational ultrametric spaces has hereditary canonical amalgamation. Given any two finite rational ultrametric spaces $A, B$ in the amalgamation position with nonempty intersection, with their metrics denoted by $d_A$ and $d_B$ respectively, define the metric $d$ on $\dom(A)\cup\dom(B)$ by letting $d_A\cup d_B\subset d$, and for points $x\in \dom(A)\setminus\dom(B)$ and $y\in\dom(B)\setminus\dom(A)$, the distance $d(x,y)$ is defined in two cases. If there is a point $z\in\dom(A)\cap\dom(B)$ such that $d_A(x, z)\neq d_B(z, y)$, then set $d(x, y)=\{d_A(x, z), d_B(z, y)\}$, and if there is no such $z$, then let $d(x, y)=\min\{d_A(z, y)\colon z\in\dom(A)\cap\dom(B)\}$. It must be verified that the definition is sound and yields an ultrametric space. It is clear that such amalgamation is hereditary and canonical.
\end{example}

\begin{example}
The class of vector spaces over a fixed finite field has hereditary canonical amalgamation: $C(A, B)$  will be the vector space with the underlying set $\dom(A)\times\dom(B)$ modulo the equivalence relation $E$ defined by $\langle x, y\rangle\mathrel{E}\langle x', y,\rangle$ if $x-x'$ and $y'-y$ are identical elements of $\dom(A)\cap\dom(B)$. Addition and scalar multiplication is defined coordinatewise.
\end{example}

\begin{example}
Let $\mathcal{F}$ be the set of finite structures with a function (coded as a relation) which from each unordered quadruple selects an unordered pair. Then $\mathcal{F}$ does not have canonical amalgamation: there is no isomorphism-invariant way to amalgamate disjoint sets $A$ and $B$ such that $A$ has three elements and $B$ has one. 
\end{example}

\noindent Finally, the collection of relational Fraisse classes with hereditary canonical disjoint amalgamation is closed under superposition, which yields interesting classes such as linearly ordered rational ultrametric spaces and the like.

Let me now turn to more set-theoretic examples of simple dynamical ideals.

\begin{example}
\label{ordersimpleexample}
Let $X$ be an ultrahomogeneous linear order , let $\Gamma$ be the group of all automorphisms of $X$ acting by application, and let $I$ be the ideal of nowhere dense sets. Then $\Gamma\acts X, I$ is a simple dynamical ideal. 
\end{example}

\begin{proof}
The argument uses the following key claim:

\begin{claim}
Let $a\subset X$ be a nowhere dense set, and let $\gamma\in\Gamma$ be an automorphism whose orbits are both cofinal and coinitial in $X$. Then there are $\gamma_i\in\pstab(a)$ and $\gd_i\in\Gamma$ for $i\in 4$ such that $\gamma=\prod_{i\in 4}\gd_i^{-1}\gamma_i\gd_i$.
\end{claim}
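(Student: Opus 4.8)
The plan is to normalize first and then reduce the statement to a pure factorization problem. Since the orbits of $\gamma$ are cofinal and coinitial, $\gamma$ has no fixed point, so as an order-automorphism it moves every point strictly in one direction; replacing $\gamma$ by $\gamma^{-1}$ if necessary (the class of products of four $\Gamma$-conjugates of $\pstab(a)$-elements is closed under inverses, reversing the order of the factors), I may assume $\gamma(x)>x$ for all $x$. Because order-automorphisms are continuous for the order topology, $\pstab(\bar a)\subseteq\pstab(a)$, so I may replace $a$ by its closure and assume $a$ is closed nowhere dense. The essential bookkeeping is this: for $g,\gd\in\Gamma$, the element $g$ fixes the image $\gd^{-1}\cdot a$ pointwise if and only if $\gamma':=\gd g\gd^{-1}$ fixes $a$ pointwise, i.e.\ $\gamma'\in\pstab(a)$ and $g=\gd^{-1}\gamma'\gd$. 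Hence it suffices to produce $g_0,g_1,g_2,g_3\in\Gamma$ with $\gamma=g_0g_1g_2g_3$ such that each $g_i$ fixes pointwise some $\Gamma$-image $a_i=\gd_i^{-1}\cdot a$ of $a$; the elements $\gd_i$ and $\gamma_i=\gd_i g_i\gd_i^{-1}$ are then read off directly.

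Next I would build the factors through the gap structure. An automorphism fixes a closed nowhere dense set $a_i$ pointwise exactly when it preserves every maximal convex component (gap) of $X\setminus a_i$ and acts as an arbitrary order-automorphism inside each gap; thus a factor constrained to fix $a_i$ can do real work only inside the gaps of $a_i$. Since $a$ is nowhere dense and $X$ is ultrahomogeneous, I have complete freedom in positioning the images $a_i$, and in the rich cases (such as $X=\Q$) every gap is order-isomorphic to a large portion of $X$, so a single factor can realize an arbitrarily long stretch of the upward motion inside one gap. The construction then proceeds by a leapfrog/telescoping scheme: choose two interleaved images of $a$ whose gap systems overlap so that the whole order is chained by overlapping gaps, and define the factors so that a given $x$ is carried upward one gap at a time until it reaches $\gamma(x)$, with the accumulated discrepancy swept out toward the cofinal and coinitial ends. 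The cofinal/coinitial hypothesis on the orbits of $\gamma$ is precisely what makes this telescoping close up: there is no bounded obstruction, so the errors introduced at each stage are pushed to the two ends and vanish in the limit. I expect the count four to arise from decoupling the two directions — handling upward travel above a fixed base point and downward travel below it separately, each direction managed by a two-factor half-shift decomposition. The bounded case, where $a$ and hence each $a_i$ is bounded, is easy, since an image of $a$ can simply be hidden in a region where the corresponding factor is already the identity; the whole content lies in the case where $a$ is cofinal and/or coinitial, where each factor is genuinely constrained throughout $X$.

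The main obstacle I anticipate is the embedding step concealed in the phrase ``choose images $a_i$ of $a$ with prescribed gaps.'' I must guarantee that the cofinal/coinitial nowhere dense sets I want to serve as fixed sets are genuinely $\Gamma$-images of the given $a$, that is, that the order-isomorphism from $a$ onto the intended target extends to a \emph{global} automorphism of $X$. This is exactly where ultrahomogeneity is indispensable, and it will require a back-and-forth argument matching $a$ with its image gap-by-gap while respecting the ambient order. Reconciling the internal isomorphism type of $a$ with the gaps available in the target is the delicate point, and it is what forces me to keep careful track of the placement of the four images rather than treating the factorization purely combinatorially.
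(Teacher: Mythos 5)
Your opening moves coincide with the paper's: the conjugation bookkeeping (it suffices to write $\gamma=g_0g_1g_2g_3$ with each $g_i$ fixing pointwise some $\Gamma$-image $a_i$ of $a$), the normalization to $\gamma(x)>x$ everywhere, and the decision to work with two interleaved images of $a$ are exactly how the paper begins. But the core of the claim --- the exact construction of the four factors --- is missing from your proposal, and the mechanism you substitute for it would fail. You propose to carry points upward ``one gap at a time'' with the ``accumulated discrepancy swept out toward the cofinal and coinitial ends,'' where it ``vanishes in the limit.'' There is no limit process available: each factor must be a single, everywhere-defined automorphism, and the product of the four must equal $\gamma$ exactly at every point. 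As you yourself observe, in the essential case $a$ (hence every image $a_i$) is cofinal and coinitial in $X$, so every factor is pinned on a cofinal and coinitial set and there is no end region into which discrepancies can be pushed. For the same reason your guess that the count four comes from ``decoupling travel above and below a base point'' cannot be implemented: no factor can agree with $\gamma$ on an upward or downward ray, since it must fix its cofinal/coinitial set $a_i$ pointwise.

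The idea you are missing is to use an orbit of $\gamma$ itself as the skeleton. Fix an increasing enumeration $\langle x_n\colon n\in\mathbb{Z}\rangle$ of one $\gamma$-orbit, so $\gamma(x_n)=x_{n+1}$ and the enumeration is cofinal and coinitial by hypothesis, and use ultrahomogeneity to place the images so that $a_0\subset\bigcup_n(x_{4n-1},x_{4n+1})$ and $a_1\subset\bigcup_n(x_{4n+1},x_{4n+3})$. First, two factors $\delta_0\in\pstab(a_0)$ and $\delta_1\in\pstab(a_1)$ are chosen so that each moves only the orbit points lying outside the blocks protecting its own $a_i$, and their composition $\beta=\delta_1\delta_0$ satisfies $\beta(x_n)=x_{n+1}$ for every $n$; this is the exact form of your ``leapfrog,'' with no error terms. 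The payoff of this choice is that $\gamma\beta^{-1}$ fixes every $x_n$, hence maps each interval $(x_n,x_{n+1})$ onto itself; therefore it splits as a product of two commuting automorphisms with disjoint supports, $\delta_2$ equal to the identity on $\bigcup_n(x_{4n-1},x_{4n+1})$ (hence in $\pstab(a_0)$) and $\delta_3$ equal to the identity on $\bigcup_n(x_{4n+1},x_{4n+3})$ (hence in $\pstab(a_1)$). Then $\gamma=\delta_3\delta_2\delta_1\delta_0$, and four is two factors for the shift plus two for the block-by-block correction. Your concern about realizing the prescribed sets as genuine $\Gamma$-images of $a$ is legitimate but is the step the paper dispatches with a single appeal to ultrahomogeneity; the substance of the claim is this shift-plus-correction factorization, which your proposal does not supply.
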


\begin{proof}
Without loss of generality assume that for all $x\in X$, $\gamma\cdot x>x$; the proof for $\gamma\cdot x<x$ is symmetric. Let $\langle x_n\colon n\in\mathbb{Z}\rangle$ be an increasing enumeration of one of the orbits. The ultrahomogeneity assumption shows that there are automorphisms $\ga_0, \ga_1\in\Gamma$ such that $a_0=\ga_0\cdot a\subset\bigcup_n (x_{4n-1}, x_{4n+1})$ and $a_1=\ga_1\cdot a\subset\bigcup_n(x_{4n+1}, x_{4n+3})$. I will express $\gamma$ as a composition of four automorphisms in $\pstab(a_0)\cup\pstab(a_1)$. Since $\pstab(a_0)=\ga_0\pstab(a)\ga_0^{-1}$ and
$\pstab(a_1)=\ga_1\pstab(a)\ga_1^{-1}$, this will conclude the proof.

First, use the ultrahomogeneity assumption on $X$ to argue that there is an automorphism $\gd_0\in\pstab(a_0)$ fixing $x_{4n}$ and such that $\gd_0(x_{4n+1})=x_{4n+2}$ and $\gd_0(x_{4n+2})=x_{4n+3}$ for all $n\in\mathbb{Z}$. Similarly, let $\gd_1\in\pstab(a_1)$ be an automorphism fixing $x_{4n+2}$ and $x_{4n+3}$ and such that $\gd_1(\gd_0(x_{4n+3}))=x_{4n+4}$ and $\gd_1(x_{4n})=x_{4n+1}$ for all $n\in\gw$. Let $\gb=\gd_1\gd_0$ and observe that $\{x_n\colon n\in\mathbb{Z}\}$ is an orbit of $\gb$. Now, let $\gd_2\in\pstab(a_0)$ be an automorphism which fixes the interval $(x_{4n-1}, x_{4n+1})$ pointwise and on $(x_{4n+1}, x_{4n+3})$ it is equal to $\gg\gb^{-1}$, this for all $n\in\mathbb{Z}$. Similarly, let $\gd_3\in\pstab(a_1)$ be an automorphism which fixes the interval $(x_{4n+1}, x_{4n+3})$ pointwise and on $(x_{4n-1}, x_{4n+1})$ it is equal to $\gg\gb^{-1}$, this for all $n\in\mathbb{Z}$. Note that $\gd_2$ and $\gd_3$ have disjoint supports, therefore they commute and $\gd_2\gd_3=\gd_3\gd_2=\gg\gb^{-1}$. In consequence, $\gg=\gd_3\gd_2\gd_1\gd_0$ as desired.
\end{proof} 

\noindent Now, suppose that $a\subset b$ are sets in the ideal $I$, let $\gamma\in\pstab(a)$, and work to show that $\gamma$ belongs to the normal subgroup of $\pstab(a)$ generated by $\pstab(b)$. Let $U$ be the set of all intervals of $X$ in which $\gamma$ has an orbit which is both coinitial and cofinal in $U$. Note that such intervals do not have smallest or largest elements, therefore they are ultrahomogeneous themselves, and every orbit is both coinitial and cofinal in them. Note also that $a\subset X\setminus\bigcup U$ and $\gamma$ fixes all elements of $X\setminus\bigcup U$. Now, note that each interval $u\in U$ is order-isomorphic to $X$; write $\Gamma_u$ for its group of automorphisms. Apply the claim with $u$ and $\Gamma_u$ instead of $X$ and $\Gamma$, and with the nowhere dense set $a\cap u\subset u$ to find automorphisms $\gamma_{iu}\in\pstab(a\cap u)$ in $\Gamma_u$ and $\gd_{iu}\in\Gamma_u$ for $i\in 4$ such that $\gamma\restriction u=\prod_{i\in 4}\gd_{iu}^{-1}\gamma_{iu}\gd_{iu}$. Finally, for each $i\in 4$ let $\gamma_i\in\Gamma$ be the union of all $\gamma_{iu}$ for $u\in U$ together with the identity on $X\setminus\bigcup U$, and similarly for $\gd_i$. It is clear that $\gamma=\prod_{i\in 4}\gd_i^{-1}\gamma_i\gd_i$, so $\gamma\in\Delta$ as desired.
\end{proof}

\begin{example}
\label{cardinalitysimpleexample}
\cite[Model N16]{howard:ac} Let $\kappa$ be an uncountable cardinal, let $X$ be a set of cardinality at least $\kappa$, let $\Gamma$ be the group of all permutations of $X$ acting by application, and let $I_\kappa$ be the ideal of subsets of $X$ of cardinality smaller than $\kappa$. Then $\Gamma\acts X, I_\kappa$ is a simple dynamical ideal. The status of well-orderable closure for $\kappa$ of countable cofinality has apparently not been known.
\end{example}

\begin{proof}
Let $a\subseteq b$ be sets in $I_\kappa$, let $\Delta$ be the normal subgroup of $\pstab(a)$ generated by $\pstab(b)$, let $\gamma\in\pstab(a)$ be an arbitrary group element, and work to show that $\gamma\in\Delta$. Let $c\in I$ be a set containing $b$ and such that $c$ is closed under $\gamma$. Let $\gb\in\pstab(a)$ be a group element such that $\gb\cdot b\cap c=a$.
Then $(\gamma\restriction c)\cup (\id\restriction\gd\cdot b)$ is a function defined on a set of cardinality smaller than $\kappa$; let $\ga\in\pstab(a)$ be any permutation of $X$ extending it.

Now, $\ga\in\pstab(\gb\cdot b)$ holds, so $\gd^{-1}\ga\gd\in\pstab(b)$ holds, and $\ga=\gd(\gd^{-1}\ga\gd)\gd^{-1}$ is an element of $\Delta$. At the same time, $\ga=\gg$ on the set $c$, so $\ga^{-1}\gg\in\pstab(c)\subset\pstab(b)$ must hold. It follows that $\gg=\ga(\ga^{-1}\gg)$ belongs to $\Delta$ as required.
\end{proof}

\begin{example}
\cite[Model N21]{howard:ac}
Let $\kappa$ be an uncountable regular cardinal. Let $X=\kappa^{<\gw}$ partially ordered by reverse extension, let $\Gamma$ be the group of all automorphisms of $X$ acting by application, and let $I$ be the ideal generated by well-founded subtrees of $X$ of cardinality smaller than $\kappa$. Then $\Gamma\acts X, I$ is a simple dynamical ideal. The status of well-orderable closure in the associated permutation model apparently has not been known.
\end{example}

\begin{proof}
Let $a\subseteq b$ be sets in the ideal $I$. Since the dynamical closure of any subset of $X$ is a subtree of $X$, I may assume that both $a$ and $b$ are in fact well-founded trees. Let $\Delta$ be the normal subgroup of $\pstab(a)$ generated by $\pstab(b)$ and let $\gamma\in\pstab(a)$ be an arbitrary automorphism of $X$; I must show that $\gamma\in\Delta$ holds.

Let $c\subset X$ be any tree of cardinality smaller than $\kappa$ which contains $b$ and which is closed under $\gamma$ and its inverse. Let $\gd\in\pstab(a)$ be an automorphism such that $c\cap\gd\cdot b=a$. Then the map $(\gamma\restriction c)\cup(\id\restriction \gd\cdot b)$ is an automorphism of a subtree of $X$ of cardinality smaller than $\kappa$. As such, it can be extended to a full automorphism $\ga\in\pstab(a)$. Then, $\ga\in\pstab(\gd\cdot b)$, so $\gd^{-1}\ga\gd\in\pstab(b)$, and $\ga=\gd(\gd^{-1}\ga\gd)\gd^{-1}\in\Delta$. At the same time, $\ga=\gg$ on the set $c$, so $\ga^{-1}\gg\in\pstab(c)\subset\pstab(b)$ must hold. It follows that $\gg=\ga(\ga^{-1}\gg)$ belongs to $\Delta$ as required.
\end{proof}

\noindent Last, but not least, the concept of simple dynamical ideal has been formulated in such a way that it survives many common operations on ideals. I will state only the most obvious:

\begin{example}
A subideal of a simple dynamical ideal is simple.
\end{example}

\noindent In view of Example~\ref{ordersimpleexample}, this includes such dynamical ideals as the following:

\begin{example}
\cite[Model N23]{howard:ac} Let $X$ be an ultrahomogeneous linear order, let $\Gamma$ be the group of all automorphisms of $X$ acting by application, and let $I$ be the ideal of sets which are well-ordered by the linear order of $X$. Then $\Gamma\acts X, I$ is a simple dynamical ideal. The status of well-orderable closure in this model has apparently not been known.
\end{example}

\begin{example}
\label{unionexample}
An increasing union of simple dynamical ideals is again simple.
\end{example}

\noindent Finally, a couple of natural examples of dynamical ideals which are not simple.

\begin{example}
Let $X=\mathbb{R}^2$, let $\Gamma$ be the homeomorphism group of $X$ acting by application, and let $I$ be the ideal of bounded sets. Then $I$ has cofinal orbits by Example~\ref{boundedexample}. However, it is not simple: let $a$ be the set containing only the origin, and let $b_\eps$ be the closed disc of radius $\eps$. It is not difficult to check that the normal subgroup of $\pstab(a)$ generated by $\pstab(b_1)$ is equal to $\bigcup_{\eps>0}\pstab(b_\eps)$. Any nontrivial rotation of $\mathbb{R}^2$ shows that this subgroup is not equal to $\pstab(a)$.
\end{example}

\begin{example}
\label{weglorzexample}
\cite[Model N51]{howard:ac} Let $X$ be the set $\power(\gw)$ equipped with the subset relation. Let $\Gamma$ be the group of all automorphisms of $X$, and let $I$ be the ideal of finite subsets of $X$. Then $\Gamma\acts X, I$ is an almost simple, not simple dynamical ideal. The status of well-orderable closure in the associated permutation model was apparently unkown.
\end{example}

\begin{proof}
It is immediate that every element of $\Gamma$ is determined by its action on singletons, so $\Gamma$ is naturally isomorphic to $S_\infty$. The Boolean operations of union, intersection, and complement on $X$ are invariant under the action.

To show that the dynamical ideal is not simple, let $x\in X$ be any set containing exactly two elements. Let $y_0, y_1$ be the singletons formed by these two elements and observe that $\pstab(\{y_0, y_1\})$ is a proper normal subgroup of $\pstab(\{x\})$. To show that the dynamical ideal is almost simple, let $a\subset X$ be a finite Boolean subalgebra such that for every finite set $x\in a$ also contains all singletons formed by elements of $a$. It will be enough to show that for every finite Boolean algebra $b\subset X$ containing $a$, the normal subgroup $\Delta\subseteq\pstab(a)$ generated by $\pstab(b)$ is equal to $\pstab(a)$.

To see this, for every infinite atom $x\in a$ consider the group $\Delta_x$ of all elements of $\Gamma$ fixing each element $\{n\}\in X$ for every $n\in\gw\setminus x$. Clearly, $\Delta_x$ is naturally isomorphic to the permutation group on $x$, therefore to $S_\infty$; it is also a subset of $\pstab(a)$. The group $\pstab(b)\cap\Delta_x$ is an uncountable subgroup of $\Delta_x$. Since the only uncountable normal subgroup of $S_\infty$ is $S_\infty$ itself, it must be the case that $\Delta_x\subset\Delta$ holds. Finally, the group $\pstab(a)$ is clearly generated by the union of the groups $\Delta_x$ as $x$ varies over all infinite atoms of $a$.  It follows that $\Delta=\pstab(a)$ as desired.
\end{proof}

\noindent On the opposite end of the spectrum lurk the dynamical ideals associated with abelian groups. This is the content of the following theorem.

\begin{theorem}
\label{abeliantheorem}
Let $\Gamma\acts X, I$ be an abelian dynamical ideal. Then, in the associated permutation model,

\begin{enumerate}
\item every set is a union of a well-orderable collection of well-orderable sets;
\item the axiom of choice for families of finite sets implies the full axiom of choice.
\end{enumerate}
\end{theorem}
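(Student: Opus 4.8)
The plan is to exploit the defining feature of an abelian group action: for an abelian group $\Gamma$, every stabilizer $\stab(A)$ and pointwise stabilizer $\pstab(a)$ is automatically a \emph{normal} subgroup of $\Gamma$, since conjugation is trivial ($\gamma\gd\gamma^{-1}=\gd$ for all $\gamma,\gd\in\Gamma$). The whole theorem should follow from translating this normality into structural statements about sets in $W[[X]]$ via the support machinery of Proposition~\ref{siproposition} and the well-orderability criterion of Proposition~\ref{wotheorem}.

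For part (1), let $A\in W[[X]]$ be arbitrary, and fix $a\in I$ with $\pstab(a)\subseteq\stab(A)$. The idea is to partition $A$ according to the $\pstab(a)$-orbit structure and show each piece is well-orderable while the index set of pieces is itself well-orderable. Concretely, for $B\in A$ pick $b_B\in I$ with $\pstab(b_B)\subseteq\stab(B)$; because $\Gamma$ is abelian, $\pstab(a)$ normalizes $\pstab(b_B)$, and one checks that $\gamma\cdot B$ has the \emph{same} pointwise stabilizer containing $\pstab(b_B)$ for every $\gamma\in\pstab(a)$. I would define an equivalence relation $E$ on $A$ by putting $B\mathrel{E} B'$ iff $B'=\gamma\cdot B$ for some $\gamma\in\pstab(a)$; this $E$ is fixed by $\pstab(a)$, hence lies in $W[[X]]$. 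Each $E$-class is a $\pstab(a)$-orbit, so it is an image of the well-orderable coset space $\pstab(a)/\pstab(b_B)$ and is therefore well-orderable in $W[[X]]$; I would verify this using Proposition~\ref{wotheorem}, noting that the normality of $\pstab(b_B)$ in the abelian $\pstab(a)$ is exactly what makes the orbit carry a $\pstab(a)$-invariant (hence model-internal) well-ordering. The quotient $A/E$, being fixed by $\pstab(a)$, is well-orderable by Proposition~\ref{wotheorem} as well. Thus $A=\bigcup(A/E)$ exhibits $A$ as a union of a well-orderable family of well-orderable sets, giving (1).

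Part (2) should follow by combining (1) with the hypothesis. Suppose the axiom of choice for families of finite sets holds in $W[[X]]$; I want the full axiom of choice, and by Proposition~\ref{wotheorem} it suffices to show every set is well-orderable (equivalently that $W[[X]]$ is the pure part up to well-ordering). Given any set $A$, part (1) writes it as $\bigcup\mathcal{C}$ with $\mathcal{C}$ well-orderable and each member of $\mathcal{C}$ well-orderable. The obstacle is that a \emph{union} of well-orderable sets indexed by a well-orderable set need not be well-orderable without some choice; the content of (2) is that the finite-choice hypothesis upgrades this. The plan is to produce, for each $C\in\mathcal{C}$, the finite set of ``extremal'' or canonically-distinguished well-orderings and then apply finite choice --- but since each $C$ may be infinite, one instead reduces to finite fibers: using the abelian orbit structure, the relevant selection problem at each point decomposes into choices from finite symmetric groups (the abelian quotients acting on orbits have finitely many ``types'' over a fixed support), and finite choice selects coherently across the well-ordered index. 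Assembling these selections yields a global well-ordering of $\bigcup\mathcal{C}=A$, hence AC.

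\textbf{The main obstacle} I anticipate is part (2): making precise the reduction of the global well-ordering problem to a \emph{family of finite} choices. The decomposition in (1) gives well-orderable pieces, but turning ``each piece well-orderable'' plus ``index set well-orderable'' into ``union well-orderable'' genuinely requires choosing a well-ordering on each piece coherently, and the only leverage is finite choice. The delicate point is identifying what the \emph{finite} sets are that one applies choice to --- presumably the finitely many orbit-types or the finitely many minimal supports compatible with a given pure code --- and verifying that the abelian hypothesis forces exactly this finiteness. By contrast, part (1) is essentially a direct orbit-counting argument once normality of pointwise stabilizers is observed, and I expect it to go through cleanly.
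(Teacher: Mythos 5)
Your part (1) is essentially the paper's own argument: decompose $A$ into $\pstab(a)$-orbits, observe that the set of orbits is fixed by $\pstab(a)$ and hence well-orderable by Proposition~\ref{wotheorem}, and use commutativity to see that a support for one member of an orbit in fact pointwise-stabilizes the whole orbit. Your detour through the coset space $\pstab(a)/\pstab(b_B)$ requires checking that this coset space and the orbit map actually lie in $W[[X]]$, which you do not do, but both facts follow from the same commutativity observation (conjugation is trivial, so group elements, subgroups, cosets, and the orbit map are all suitably invariant); the paper avoids this by arguing directly, via Proposition~\ref{wotheorem}(2), that $\pstab(d)$ fixes every $\gd\cdot D$ because $\gamma\gd\cdot D=\gd\gamma\cdot D=\gd\cdot D$. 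So (1) is fine.

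Part (2) is where the genuine gap lies, exactly at the point you flag as the main obstacle, and the obstacle is not surmountable along the route you propose. The $\pstab(a)$-orbits produced by (1) are in general infinite torsor-like sets: all points of an orbit have the same stabilizer (by commutativity), so well-ordering an orbit amounts to choosing a base point in it, and nothing in the abelian hypothesis yields the ``finitely many orbit-types'' or ``finite fibers'' you invoke; no family of finite sets encoding these base-point choices is ever constructed, and I do not see one. The paper's proof of (2) does not attempt to upgrade (1) at all. Instead, it applies the finite-choice hypothesis to one specific family $A$: the collection of all nonempty \emph{finite} subsets of $X$ together with all unordered \emph{pairs} of disjoint nonempty subsets of $X$. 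Every member of $A$ is a finite set (a pair of infinite sets is still a two-element set), so finite choice yields a selector $f\in W[[X]]$ with some support $b\in I$. One then shows every $\gamma\in\pstab(b)$ fixes every $x\in X$: if the $\langle\gamma\rangle$-orbit $c$ of $x$ were finite with more than one element, $\gamma$ would fix $c$ and $f$ but no element of $c$, contradicting $\gamma\cdot f(c)=f(\gamma\cdot c)=f(c)$; if $c$ is infinite, the relation $y\mathrel{E}z$ iff $y=\gamma^n\cdot z$ for some even $n$ is invariant under all of $\Gamma$ \emph{by commutativity} (this is precisely where abelianness enters), hence lies in $W[[X]]$, and $\gamma$ swaps the two $E$-classes meeting $c$ while fixing the pair of them, contradicting the invariance of $f$ again. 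Consequently $\pstab(b)$ acts trivially on $X$, hence (by $\in$-recursion) on all of $V[[X]]$, so $W[[X]]=V[[X]]$ and the full axiom of choice holds. The idea you are missing is that finite choice is not used to assemble well-orderings of the pieces from (1); it is used on two-element sets manufactured from orbits of single group elements to force the permutation model to collapse to $V[[X]]$.
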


\begin{proof}
For (1), let $A\in W[[X]]$ be a set, and let $a\in I$ be a set such that $\pstab(a)\subset\stab(A)$. I need to produce a well-orderable set $B$ such that $B$ is well-orderable, every element of $B$ is, and $\bigcup B=A$. To this end, let $B$ be the set of all $\pstab(a)$-orbits of elements of $A$. Every element of $B$ is $\pstab(a)$-invariant, so $B$ belongs to the permutation model and it is well-orderable there by Proposition~\ref{wotheorem}. Now, let $C\in B$ be an arbitrary set, let $D\in C$ be arbitrary, and let $d\in I$ be such that $a\subset d$ and $\pstab(d)\subset\stab(D)$. It will be enough to show that $\pstab(d)$ fixes all elements of $C$--then, $C$ is well-orderable in the permutation model by Proposition~\ref{wotheorem} again.

Thus, suppose that $\gamma\in\pstab(d)$ is arbitrary. Let $\gd\in\pstab(a)$ be arbitrary; I must show that $\gamma\gd\cdot D=\gd\cdot D$. To this end, use the commutativity of the group $\Gamma$ to conclude that the left-hand side of the equation is equal to $\gd\gamma\cdot D$ which is equal to $\gd\cdot D$ as $\gamma\in\stab(D)$. (1) follows.

For (2), in $W[[X]]$, consider the set $A$ which contains every nonempty finite subset of $X$ and every unordered pair of nonempty disjoint subsets of $X$. It will be enough to show that whenever $f$ is a selector function on $A$ and $b\in I$ is a set such that $\pstab(b)\subset\stab(f)$, then $\pstab(f)=\pstab(X)$. To do this, fix $f$ and $b$ as assumed, and towards a contradiction assume that there are elements $\gamma\in\pstab(b)$ and $x\in X$ such that $\gamma\cdot x\neq x$. Write $c$ for the orbit of $x$ under $\gamma$ and $\gamma^{-1}$. There are two cases.

\noindent\textbf{Case 1.} The set $c\subset X$ is finite. Then $c\in\dom(f)$, both $f$ and $c$ are fixed by $\gamma$, but no element of $c$ is. This must include the element $f(c)\in c$, contradicting the fact that the action by $\gamma$ is an $\in$-automorphism of the permutation model.

\noindent\textbf{Case 2.} The set $c\subset X$ is infinite. In such a case, define an equivalence relation $E$ on $X$ connecting elements $y, z\in X$ if there is an even integer $n$ such that $y=\gamma^n\cdot z$. The equivalence relation $E$ is invariant under the group action: whenever $\gd\in\Gamma$ is arbitrary and $y=\gamma^n\cdot z$ then $\gd\cdot y=\gd\gamma^n\cdot z=\gamma^n\gd\cdot z$, where the last equality follows from commutativity of the group $\Gamma$. It follows that $E$ belongs to the permutation model. Consider the set $d$ of the two $E$-equivalence classes represented in the set $c$. It is clear that $d\in\dom(f)$, $\gamma$ fixes both $f$ and $d$. It is also clear that $\gamma$ flips the two elements of $d$, so it moves $f(d)$. This again contradicts the fact that the action by $\gamma$ is an $\in$-automorphism of the permutation model.
\end{proof}

\begin{example}
\cite[Model N2(LO)]{howard:ac} Let $X=\gw\times \mathbb{Z}$, let $\Gamma=\mathbb{Z}^\gw$ be the abelian group acting on $X$ by $\gamma\cdot \langle n, z\rangle=\langle n, \gamma(n)+z\rangle$, let $I$ be the ideal generated by the vertical sections of $X$. Axiom of choice fails in the associated permutation model; thus, there must be a collection of finite sets for which there is no choice function. To find it, for each $n\in\gw$ let $E_n$ be the equivalence relation on $n$-th vertical section of $X$ defined by $\langle n, z_0\rangle E_n\langle n, z_1\rangle$ if $z_0-z_1$ is an even integer. Let $Y_n$ be the set of the two $E_n$-equivalence classes. It is not difficult to see that in the permutation model, $\langle Y_n\colon n\in\gw\rangle$ is a sequence of two-element sets without a choice function.
\end{example}

\subsection{Dedekind finite and amorphous sets}
\label{finitesection}

In the broad field of definitions of finiteness, the following definitions stand out:

\begin{definition}
A set is \emph{Dedekind finite} if it does not contain an injective image of $\gw$. A set is \emph{amorphous} if it cannot be partitioned into two infinite sets.
\end{definition}

\noindent Non-existence of amorphous or infinite, Dedekind finite sets is one of the more common fragments of axiom of choice \cite[Form 9]{howard:ac}. The purpose of this section is to provide a dynamical criterion related to the previous concepts which implies it.

\begin{definition}
\label{stratifieddefinition}
A dynamical ideal $\Gamma\acts X, I$ is said to be \emph{stratified} if there are $\Gamma$-invariant ideals $I_n$ on $X$ for $n\in\gw$ such that

\begin{enumerate}
\item $I$ is the increasing union of all $I_n$'s;
\item for every $n\in\gw$ and all sets $a, b_m\colon m\in\gw$ in $I_n$ there are elements $\gamma_m\in\pstab(a)$ such that $\bigcup_m\gamma_m\cdot b_m\in I_{n+1}$.
\end{enumerate}
\end{definition}

\begin{theorem}
Let $\Gamma\acts X, I$ be a stratified dynamical ideal. In the associated permutation model, every set is either a countable union of finite sets or it contains an injective image of $\gw$.
\end{theorem}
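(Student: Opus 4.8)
The plan is to filter $A$ by the ideal levels and to split into the two horns according to whether this filtration stabilizes at finite sets. Fix $A\in W[[X]]$ and let $a\in I$ satisfy $\pstab(a)\subseteq\stab(A)$; since $I=\bigcup_n I_n$ is increasing, $a\in I_n$ for some $n$. For each $m\geq n$ I would set
$A_m=\{B\in A:\exists b\in I_m\ (a\subseteq b\ \wedge\ \pstab(b)\subseteq\stab(B))\}$.
Because each $I_m$ is $\Gamma$-invariant and closed under finite unions, the diagram chase of Proposition~\ref{siproposition}(3) shows that every $\gd\in\pstab(a)$ maps $A_m$ onto itself; hence $\pstab(a)\subseteq\stab(A_m)$ and, fixing the indices too, $\pstab(a)\subseteq\stab(\langle A_m:m\geq n\rangle)$. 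Thus each $A_m$ and the whole sequence lie in $W[[X]]$, and $A=\bigcup_{m\geq n}A_m$ is an increasing union. If every $A_m$ is finite, then $\langle A_m\rangle$ already exhibits $A$ as a countable union of finite sets in $W[[X]]$, and we are in the first horn.

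So suppose some $A_m$ is infinite; then I must produce an injective image of $\gw$ inside $A$. The naive attempt---choose distinct $B_0,B_1,\dots\in A_m$ with supports $b_k\in I_m$, apply clause (2) of Definition~\ref{stratifieddefinition} to obtain $\gamma_k\in\pstab(a)$ with $\bigcup_k\gamma_k\cdot b_k\in I_{m+1}$, and read off $\langle\gamma_k\cdot B_k\rangle$---does not work, and this is the crux of the matter: nothing prevents the translations $\gamma_k$ from collapsing the distinct points $B_k$ to a repeated value, so the resulting sequence need not be injective. (In Dedekind-finite situations such a collapse is in fact forced.) The remedy I would use is to translate finite tuples rather than single points.

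For each $j\in\gw$ I fix, using choice in $V[[X]]$, an injective $j$-tuple $\tau_j=\langle B^j_0,\dots,B^j_{j-1}\rangle$ of elements of $A_m$. Each component has a support in $I_m$ above $a$, so their finite union $s_j\in I_m$ is a support of the whole tuple with $a\subseteq s_j$. Applying clause (2) of Definition~\ref{stratifieddefinition} at level $m$ to $a$ and $\langle s_j:j\in\gw\rangle$ yields $\gamma_j\in\pstab(a)$ with $c=\bigcup_j\gamma_j\cdot s_j\in I_{m+1}\subseteq I$. Put $\tau'_j=\gamma_j\cdot\tau_j$; since $\gamma_j$ fixes $a$ it permutes $A_m$, so $\tau'_j$ is again an injective $j$-tuple from $A_m$, with support $\gamma_j\cdot s_j\subseteq c$. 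Consequently $\pstab(c)$ fixes every $\tau'_j$ and the indexing, so $\langle\tau'_j:j\in\gw\rangle$ has support $c$ and belongs to $W[[X]]$.

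It remains to extract a genuine countable subset, and here the collapse no longer matters. Working inside the ZF-model $W[[X]]$, let $D=\bigcup_j\rng(\tau'_j)\subseteq A$. For $x\in D$ let $\langle j_x,i_x\rangle$ be the lexicographically least pair with $\tau'_{j_x}(i_x)=x$; then $x\mapsto\langle j_x,i_x\rangle$ injects $D$ into $\gw\times\gw$, so $D$ is countable, while $|D|\geq j$ for every $j$ forces $D$ to be infinite. Thus $D$ is a countably infinite subset of $A$, i.e.\ $A$ contains an injective image of $\gw$, which exhausts the second horn. The only genuine difficulty is the collapse phenomenon flagged above; passing from points to tuples of unbounded length is exactly what converts clause (2) of stratification into the level-shifted instance of countable choice needed to run the classical ZF argument that an infinite set carrying arbitrarily long injective tuples is Dedekind-infinite.
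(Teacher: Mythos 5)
Your proof is correct and follows essentially the same route as the paper's: the same filtration $A_m$ by support level, the same use of injective $k$-tuples of unbounded length (rather than single points), the same application of clause (2) of Definition~\ref{stratifieddefinition} to collect the translated supports into one set $c\in I_{m+1}$, and the same extraction of a countably infinite subset of $A$ from the resulting sequence in $W[[X]]$. The two places where you elaborate---why translating single points can collapse them, and the lexicographic injection into $\gw\times\gw$ showing the union of ranges is countably infinite---are just careful expansions of steps the paper treats as immediate.
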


\noindent In particular, the permutation model contains no amorphous sets. A countable union of finite sets can certainly be an infinite, Dedekind finite set. However, such an eventuality can be excluded if, for example, the ideals in the stratification of $I$ are simple. Then, even the union is simple (Example~\ref{unionexample}), therefore in the permutation model a countable union of finite sets must be countable by Theorem~\ref{simpletheorem}.

\begin{proof}
Let $\{I_n\colon n\in\gw\}$ be the stratification of $I$. Let $A$ be a set in the permutation model. Let $a\in I$ be a set such that $\pstab(a)\subseteq\stab(A)$ holds, and pick $n\in\gw$ such that $a\in I_n$ holds. For every number $m\in\gw$ let $A_m=\{B\in A\colon\exists b\in I_m\ \pstab(b)\subseteq\stab(B)\}$. By the invariance of supports (Proposition~\ref{siproposition}), the sequence $\langle A_m\colon m\in\gw\rangle$ belongs to the permutation model, and $A$ is the increasing union of the sets $A_m$. It will be enough to show that if one of the sets $A_m$ is infinite, then it contains an injective image of $\gw$.

Suppose then that $m\geq n$ is such that $A_m$ is infinite. Let $B_k$ for $k\in\gw$ be some injective $k$-tuples of elements of $A_m$. Note that since $I_m$ is an ideal, for each $k\in\gw$ there is a set $b_k\in I_m$ such that $\pstab(b_k)\subseteq\stab(B_k)$. Use the stratification assumption to find group elements $\gamma_k\in\pstab(a)$ such that $c=\bigcup_{k\in\gw}\gamma_k\cdot b_k\in I_{m+1}$.  Consider the sequence $C=\langle \gamma_k\cdot B_k\colon k\in\gw\rangle$. By the invariance of supports (Proposition~\ref{siproposition}), $\pstab(c)\subseteq\stab(C)$, so $C$ belongs to the permutation model. Since each $\gamma_k$ fixes the set $A$, the sequence $C$ consists of $k$-tuples of elements of $A$ for each $k$. Clearly, the set $\bigcup_k\rng(\gamma_k\cdot B_k)$ is an infinite countable subset of $A$ as required.
\end{proof}

\begin{example}
Let $X$ be a countable dense linear ordering without endpoints. Let $\Gamma$ be its automorphism group acting on $X$ by application. Let $\ga>\gw$ be a countable ordinal closed under ordinal addition, and let $I_\ga$ be the ideal of subsets of $X$ which are well-ordered of ordertype less than $\ga$. If $\ga$ is closed under ordinal multiplication, then $\Gamma\acts X, I_\ga$ is a stratified dynamical ideal. The ideal is also simple by Example~\ref{ordersimpleexample}; in conclusion, in the associated permutation model every infinite set contains an injective image of $\gw$.
\end{example}

\begin{proof}
Let $\langle\gb_n\colon n\in\gw\rangle$ be an increasing sequence of ordinals closed under ordinal addition, converging to $\ga$, and such that $\gb_{n+1}>(\gw\cdot\gb_n)\cdot\gb_n$. Then $I_\ga$ is the increasing union of $I_{\gb_n}$ for $n\in\gw$. In addition, an inspection of the proof of Example~\ref{wocompleteexample} shows that Definition~\ref{stratifieddefinition} holds for this stratification of $I_\ga$.
\end{proof}

\begin{example}
Let $\kappa$ be an uncountable cardinal, let $X$ be a set of cardinality at least $\kappa$, let $\Gamma$ be the group of all permutations of $X$ acting by application, and let $I_\kappa$ be the ideal of subsets of $X$ of cardinality smaller than $\kappa$. If $\kappa$ has countable cofinality, then $\Gamma\acts X, I$ is a stratified dynamical ideal. The ideal is simple by Example~\ref{cardinalitysimpleexample}; in conclusion, in the associated permutation model every infinite set contains an injective image of $\gw$.
\end{example}

\begin{proof}
Just let $\langle \lambda_n\colon n\in\gw\rangle$ be an increasing sequence of cardinals converging to $\kappa$. Then $I_\kappa$ is an increasing union of ideals $I_{\lambda_n^+}$ for $n\in\gw$. Each of these dynamical ideals is $\gs$-complete, and in fact has cofinal orbits by Example~\ref{cardinalitycofinalexample}; thus, Definition~\ref{stratifieddefinition} holds for this stratification of $I_\kappa$.
\end{proof}

\begin{example}
In the fairly popular situation where $I$ is generated by a countable increasing sequence of invariant sets, $\Gamma\acts X, I$ is a stratified dynamical ideal. For example, when $\mathbb{Z}$ acts on a countable set $X$ and $I$ is the ideal of finite sets, then there are two cases. Either there is an infinite orbit. In such a case, for any $x\in X$ in the infinite orbit, $\stab(x)=\{0\}$ and $W[[X]]=V[[X]]$; in this trivial case, the full axiom of choice holds in the associated permutation model. Or, all orbits are finite, and then $X$ is an increasing union of countably many finite invariant sets.
\end{example}

\bibliographystyle{plain} 
\bibliography{odkazy,zapletal,shelah}

\end{document}